\newtheorem{theorem}{Theorem}[section]
\newtheorem{corollary}{Corollary}[section]
\newtheorem{lemma}{Lemma}[section]
\newtheorem{proposition}{Proposition}[section]
\newtheorem{definition}{Definition}[section]
\newtheorem{remark}{Remark}[section]
\def\R{{\mathfrak R}\, }
\def\D{{\mathfrak D}\, }
\begin{document}
\begin{center}{\bf \LARGE Characterization of Lie multiplicative derivation on alternative rings}\\
\vspace{.2in}
\noindent {\bf Bruno Leonardo Macedo Ferreira}\\
{\it Technological Federal University of Paran\'{a},\\
Professora Laura Pacheco Bastos Avenue, 800,\\
85053-510, Guarapuava, Brazil.}\\
e-mail: brunoferreira@utfpr.edu.br
\\
and
\\
\noindent {\bf Henrique Guzzo Jr.}\\
{\it Institute of Mathematics and Statistics\\
University of S\~{a}o Paulo,\\
Mat\~{a}o Street, 1010,\\   
05508-090, S\~{a}o Paulo, Brazil.}\\
e-mail: guzzo@ime.usp.br

\end{center}
\begin{abstract}
In this paper we generalize the result valid for associative rings due \cite[Martindale III]{Mart} and \cite[Bre$\check{s}$ar]{bresar} to alternative rings.
Let $\R$ be an unital alternative ring, and $\D \colon \R \rightarrow \R$ is a Lie multiplicative derivation. Then $\D$ is the form $\delta + \tau$ where $\delta$ is an additive derivation of $\R$ and $\tau$ is a map from $\R$ into its center $\mathcal{\R}$, which maps commutators into the zero.
\end{abstract}
{\bf {\it AMS 2010 Subject Classification:}} 17A36, 17D05\\
{\bf {\it Keywords:}} Lie multiplicative derivation; Prime alternative rings\\
{\bf {\it Running Head:}} Lie multiplicative derivation
\section{Alternative rings and Lie multiplicative derivation}

Let $\mathfrak{R}$ be a unital ring not necessarily associative or commutative and consider the following convention for its multiplication operation: $xy\cdot z = (xy)z$ and $x\cdot yz = x(yz)$ for $x,y,z\in \mathfrak{R}$, to reduce the number of parentheses. We denote the {\it associator} of $\mathfrak{R}$ by $(x,y,z)=xy\cdot z-x\cdot yz$ for $x,y,z\in \mathfrak{R}$. And $[x,y] = xy - yx$ is the usual Lie product of $x$ and $y$, with $x,y \in \mathfrak{R}$.

Let $\mathfrak{R}$ be a ring and $\D\colon \mathfrak{R}\rightarrow \mathfrak{R}$ a mapping of $\mathfrak{R}$ into itself. We call $\D$ a {\it Lie multiplicative derivation} of $\mathfrak{R}$ into itself if for all $x, y \in \mathfrak{R}$ 
\begin{eqnarray*}
\D\big([x,y]) = [\D(x),y] + [x,\D(y)].
\end{eqnarray*}
A ring $\mathfrak{R}$ is said to be {\it alternative} if $(x,x,y)=0=(y,x,x)$ for all $x,y\in \mathfrak{R}$. One easily sees that any associative ring is an alternative ring. An alternative ring $\mathfrak{R}$ is called {\it k-torsion free} if $k\,x=0$ implies $x=0,$ for any $x\in \mathfrak{R},$ where $k\in{\mathbb Z},\, k>0$, and {\it prime} if $\mathfrak{AB} \neq 0$ for any two nonzero ideals $\mathfrak{A},\mathfrak{B}\subseteq \mathfrak{R}$.
The {\it nucleus} of an alternative ring $\mathfrak{R}$ \mbox{is defined by} $$\mathcal{N}(\mathfrak{R})=\{r\in \mathfrak{R}\mid (x,y,r)=0=(x,r,y)=(r,x,y) \hbox{ for all }x,y\in \mathfrak{R}\}.$$
And the {\it centre} of an alternative ring $\mathfrak{R}$ \mbox{is defined by} $$\mathcal{Z}(\mathfrak{R})=\{r\in \mathcal{N}\mid [r, x] = 0 \hbox{ for all }x \in \mathfrak{R}\}.$$
\begin{theorem}\label{meu}
Let $\mathfrak{R}$ be a $3$-torsion free alternative ring. So
$\mathfrak{R}$ is a prime ring if and only if $a\mathfrak{R} \cdot b=0$ (or $a \cdot \mathfrak{R}b=0$) implies $a = 0$ or $b =0$ for $a, b \in \mathfrak{R}$. 
\end{theorem}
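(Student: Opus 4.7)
The equivalence splits into an easy and a hard direction.

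\emph{Easy direction $(\Leftarrow)$.} I would argue the contrapositive. If $\mathfrak{R}$ is not prime, pick nonzero ideals $\mathfrak{A}, \mathfrak{B}$ with $\mathfrak{A}\mathfrak{B} = 0$ and nonzero elements $a \in \mathfrak{A}$, $b \in \mathfrak{B}$. Since $a\mathfrak{R} \subseteq \mathfrak{A}$, we get $(a\mathfrak{R})b \subseteq \mathfrak{A}\mathfrak{B} = 0$, i.e.\ $a\mathfrak{R}\cdot b = 0$, which contradicts the assumed cancellation property. The variant with $a \cdot \mathfrak{R}b$ follows symmetrically by using $\mathfrak{R}b \subseteq \mathfrak{B}$.

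\emph{Hard direction $(\Rightarrow)$.} Assume $\mathfrak{R}$ is prime and $a\mathfrak{R}\cdot b = 0$ with $a \neq 0$; the goal is to conclude that $b = 0$. The plan is to introduce the two-sided ideals $\mathfrak{A}$ and $\mathfrak{B}$ generated respectively by $a$ and by $b$, and to verify that $\mathfrak{A}\mathfrak{B} = 0$; primeness of $\mathfrak{R}$ then forces $\mathfrak{B} = 0$, so in particular $b = 0$. The case $a \cdot \mathfrak{R}b = 0$ is handled by the left/right-symmetric version of the same argument.

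\emph{Where the work lies.} In an alternative (non-associative) ring, the two-sided ideal generated by an element is appreciably richer than in the associative setting: besides $\mathbb{Z}a$, $\mathfrak{R}a$, $a\mathfrak{R}$, and their iterated products, it contains associators of the form $(x,a,y)$. Consequently, the single relation $(ar)b = 0$ for $r \in \mathfrak{R}$ must be propagated, step by step, to every mixed product between a generator of $\mathfrak{A}$ and a generator of $\mathfrak{B}$ (typical intermediate relations being $a(rb) = 0$, $(ra)b = 0$, $(a,r,b) = 0$, $(ra)(sb) = 0$, and so on). This propagation is carried out by a systematic use of the alternative identities $(x,x,y) = 0 = (y,x,x)$ and their linearisations $(x,y,z) + (y,x,z) = 0$ and $(x,y,z) + (x,z,y) = 0$, the flexible law $(x,y,x) = 0$, together with the Moufang identities. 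The $3$-torsion-free hypothesis is invoked precisely to cancel the integer coefficients (typically a factor of $3$) produced when one fully linearises a trilinear associator identity. The main obstacle is therefore the combinatorial bookkeeping of associators rather than the high-level structural step, which is essentially identical to the classical associative prime-ring characterisation.
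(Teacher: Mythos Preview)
Your easy direction $(\Leftarrow)$ matches the paper's.

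For the hard direction $(\Rightarrow)$, however, the paper does \emph{not} attempt the direct ideal-product argument you outline. Instead it invokes the structure theory of prime alternative rings due to Slater: from a $3$-torsion-free prime alternative ring $\mathfrak{R}$ one obtains a chain of subrings $\mathfrak{R}=\mathcal{A}_0\supseteq\mathcal{A}_1\supseteq\cdots\supseteq\mathcal{A}_n=\mathcal{A}\neq 0$, and then Slater's Proposition~3.5(e) gives that $a\mathcal{A}\cdot b=0$ already forces $a=0$ or $b=0$. The whole weight of the argument is thus exported to \cite{slater}.

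This is a genuinely different route from yours, and the difference matters. Your plan is to manufacture the two-sided ideals $\mathfrak{A}=\langle a\rangle$, $\mathfrak{B}=\langle b\rangle$ and prove $\mathfrak{A}\mathfrak{B}=0$ by ``propagating'' the single relation $(ar)b=0$ through the linearised alternative and Moufang identities. You correctly identify that in the non-associative setting $\mathfrak{A}$ and $\mathfrak{B}$ contain iterated products and associators, but you then assert, without carrying it out, that the propagation goes through as routine bookkeeping. That is precisely the gap. Already the first steps are not obvious: from $(ar)b=0$ one does not get $(ra)b=0$ or $a(rb)=0$ by a single identity; the linearised left-alternative law gives only $(ra)b=a(rb)$ (using $ab=0$), and neither side is yet known to vanish. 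Pushing further to mixed words like $((ra)s)\cdot(t(bu))$ compounds the problem. The fact that the paper (and the literature) appeals to Slater's structural theorems rather than to an elementary associator calculation is a strong signal that this step is the substance of the theorem, not a clerical detail; in particular, the $3$-torsion-free hypothesis enters in Slater's construction of the chain, not merely as a coefficient to be cancelled at the end of a linearisation.

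So your outline is not wrong in spirit, but as written it defers exactly the nontrivial content. If you want to avoid citing Slater, you would need to actually exhibit the sequence of identities that kills every product $\mathfrak{A}\mathfrak{B}$, and you should expect that to be substantially harder than the associative case.
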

\begin{proof}
Clearly all alternative rings satisfying the properties $a\mathfrak{R} \cdot b=0$ (or $a \cdot \mathfrak{R}b=0$) are prime rings.
Suppose $\mathfrak{R}$ is a prime ring by \cite[Lemma $2.4$, Theorem $A$ and Proposition $3.5$]{slater} we have $\mathfrak{R} = \mathcal{A}_{0} \supseteq \mathcal{A}_{1} \supseteq \cdots \supseteq \mathcal{A}_{n} = \mathcal{A} \neq (0)$ is a chain of subrings of $\mathfrak{R}$. If $a\mathfrak{R} \cdot b=0$ (or $a \cdot \mathfrak{R}b=0$) hence $a\mathcal{A} \cdot b=0$ (or $a \cdot \mathcal{A}b=0$) follows \cite[Proposition 3.5 (e)]{slater} that $a= 0$ or $b=0$. 
 \end{proof}

\begin{definition}
A ring $\R$ is said to be flexible if satisfies
$$(x,y,x) = 0 \ \ for \ all \ x,y \in \R.$$
It is known that alternative rings are flexible.
\end{definition}

\begin{proposition}\label{Lflexivel}
Let $\R$ be a alternative ring then $\R$ satisfies 
$$(x,y,z) + (z,y,x) = 0 \ \ for \ \ all \ x,y,z \in \R.$$
\end{proposition}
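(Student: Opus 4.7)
The plan is to derive the identity by linearizing the flexible law $(x,y,x) = 0$, which holds in $\R$ by the preceding definition. The key observation is that the associator $(u,v,w) = uv\cdot w - u\cdot vw$ is additive (in fact $\mathbb{Z}$-trilinear) in each of its three arguments, simply because the multiplication of $\R$ is biadditive. No further structure is needed for this step.

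First I would fix arbitrary $x,y,z \in \R$ and apply the flexible identity to the element $x+z$ in place of its first and third slots:
\begin{eqnarray*}
0 = (x+z,\, y,\, x+z).
\end{eqnarray*}
Then I would expand the right-hand side using additivity of the associator in the first and third arguments, obtaining
\begin{eqnarray*}
0 = (x,y,x) + (x,y,z) + (z,y,x) + (z,y,z).
\end{eqnarray*}

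Finally, the two ``diagonal'' terms $(x,y,x)$ and $(z,y,z)$ vanish by flexibility, leaving exactly $(x,y,z) + (z,y,x) = 0$, which is the desired conclusion. There is no real obstacle here: the only thing to be careful about is confirming that the associator distributes over addition in each slot, but this is immediate from the biadditivity of the ring multiplication and requires neither associativity nor the alternative laws.
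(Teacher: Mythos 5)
Your proof is correct and is exactly the paper's argument: the paper's proof is the one-line ``Just linearize the identity $(x,y,x)=0$,'' and your substitution of $x+z$ into the flexible law together with the additivity of the associator is precisely that linearization, written out in full.
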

\begin{proof} Just linearize the identity $(x,y,x) = 0$.
\end{proof}
A nonzero element $e_{1}\in \mathfrak{R}$ is called an {\it idempotent} if $e_{1}e_{1}=e_{1}$ and a {\it nontrivial idempotent} if it is an idempotent different from the multiplicative identity element of $\mathfrak{R}$. Let us consider $\mathfrak{R}$ an alternative ring and fix a nontrivial idempotent $e_{1}\in\mathfrak{R}$. Let \mbox{$e_2 \colon\mathfrak{R}\rightarrow\mathfrak{R}$} and $e'_2 \colon\mathfrak{R}\rightarrow\mathfrak{R}$ be linear operators given by $e_2(a)=a-e_1a$ and $e_2'(a)=a-ae_1.$ Clearly $e_2^2=e_2,$ $(e_2')^2=e_2'$ and we note that if $\mathfrak{R}$ has a unity, then we can consider $e_2=1-e_1\in \mathfrak{R}$. Let us denote $e_2(a)$ by $e_2a$ and $e_2'(a)$ by $ae_2$. It is easy to see that $e_ia\cdot e_j=e_i\cdot ae_j~(i,j=1,2)$ for all $a\in \mathfrak{R}$. Then $\mathfrak{R}$ has a Peirce decomposition
$\mathfrak{R}=\mathfrak{R}_{11}\oplus \mathfrak{R}_{12}\oplus
\mathfrak{R}_{21}\oplus \mathfrak{R}_{22},$ where
$\mathfrak{R}_{ij}=e_{i}\mathfrak{R}e_{j}$ $(i,j=1,2)$ \cite{He}, satisfying the following multiplicative relations:
\begin{enumerate}\label{asquatro}
\item [\it (i)] $\mathfrak{R}_{ij}\mathfrak{R}_{jl}\subseteq\mathfrak{R}_{il}\
(i,j,l=1,2);$
\item [\it (ii)] $\mathfrak{R}_{ij}\mathfrak{R}_{ij}\subseteq \mathfrak{R}_{ji}\
(i,j=1,2);$
\item [\it (iii)] $\mathfrak{R}_{ij}\mathfrak{R}_{kl}=0,$ if $j\neq k$ and
$(i,j)\neq (k,l),\ (i,j,k,l=1,2);$
\item [\it (iv)] $x_{ij}^{2}=0,$ for all $x_{ij}\in \mathfrak{R}_{ij}\ (i,j=1,2;~i\neq j).$
\end{enumerate}
In this paper we consider that $\R$ is $2,3$-torsion free alternative ring with satisfying:
\begin{enumerate}\label{identi}
\item [\it (1)] If $x_{ij}\R_{ji} = 0$ then $x_{ij} = 0$ ($i \neq j$);
\item [\it (2)] If $x_{11}\R_{12} = 0$ or $\R_{21}x_{11} = 0$ then $x_{11} = 0$;
\item [\it (3)] If $\R_{12}x_{22} = 0$ or $x_{22}\R_{21} = 0$ then $x_{22} = 0$;
\item [\it (4)] If $z \in \mathcal{Z}$ with $z \neq 0$ then $z\R = \R$.
\end{enumerate}

\begin{remark}
Note that prime alternative rings satisfy (\ref{identi}), (2), (3). In deed, we firstly show (\ref{identi}).

Let be $x_{ij}\R_{ji} = 0$ so $x_{ij}(\R e_i) = 0$. As $\R$ is $3$-torsion free alternative ring and $e_i$ is a nontrivial idempotent we have $x_{ij} = 0$, by Theorem \ref{meu}.

(2) If $x_{11}\R_{12} = 0$ or $\R_{21}x_{11} = 0$ so $x_{11}(\R e_2) = 0$ or $(e_2 \R) x_{11} = 0$. Thus $x_{11} = 0$ because $e_2$ is a nontrivial idempotent.

(3) It is similar to (2).  
\end{remark}

\begin{proposition}\label{prop2}
Let $\R$ be a $2,3$-torsion free alternative ring satisfying $(1)$, $(2)$, $(3)$.
\begin{enumerate}
\item [$(\spadesuit)$] If $[a_{11}+ a_{22}, \mathfrak{R}_{12}] = 0$, then $a_{11} + a_{22} \in \mathcal{Z}(\mathfrak{R})$,
\item [$(\clubsuit)$] If $[a_{11}+ a_{22}, \mathfrak{R}_{21}] = 0$, then $a_{11} + a_{22} \in \mathcal{Z}(\mathfrak{R})$.
\end{enumerate}
\end{proposition}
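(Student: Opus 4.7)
The strategy is: exploit the hypothesis through the Peirce decomposition to pin down how $a := a_{11} + a_{22}$ interacts with each Peirce summand, then promote these interactions to membership in $\mathcal{Z}(\mathfrak{R})$. I treat $(\spadesuit)$ in detail; $(\clubsuit)$ is mirror-symmetric, with $\mathfrak{R}_{21}$ playing the role of $\mathfrak{R}_{12}$.

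Expanding $[a, x_{12}] = 0$ and using $\mathfrak{R}_{22}\mathfrak{R}_{12} = 0 = \mathfrak{R}_{12}\mathfrak{R}_{11}$ collapses the hypothesis to the single working identity
\[
a_{11}\,x_{12} \;=\; x_{12}\,a_{22} \qquad (\ast)
\]
for every $x_{12} \in \mathfrak{R}_{12}$. This $(\ast)$ is the engine of the whole argument. The non-associativity is controlled via the alternating property of the associator in alternative rings (Proposition~\ref{Lflexivel} together with the linearizations $(x,y,z) = -(y,x,z) = (y,z,x)$), so every test computation reduces to a finite combination of commutators plus associator residues, which the alternating identity then rearranges and cancels.

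Carrying this out, I would in turn (i) show $[a_{11}, b_{11}] = 0$ for every $b_{11}\in\mathfrak{R}_{11}$ by testing $[a_{11},b_{11}]\cdot y_{12}$, substituting $(\ast)$ twice and handling the residual associators via the alternating identity, then applying condition~(2); (ii) show $[a_{22}, b_{22}] = 0$ symmetrically, testing on the left by $y_{12}$ and applying condition~(3); and (iii) show $a_{22}x_{21} = x_{21}a_{11}$ for every $x_{21}\in\mathfrak{R}_{21}$ by testing $[a,x_{21}]\cdot y_{12}$ (noting $x_{21}y_{12}\in\mathfrak{R}_{22}$ so that $[a_{22},x_{21}y_{12}]=0$ by (ii)) and invoking condition~(1). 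Together with the hypothesis on $\mathfrak{R}_{12}$, this gives $[a,\mathfrak{R}]=0$. Nucleus membership $a\in\mathcal{N}(\mathfrak{R})$ is then handled analogously: for each pair of Peirce indices one checks $(a, r_{ij}, s_{kl})=0$; most vanish at once by the Peirce product rules, while the mixed cases reduce again to $(\ast)$ plus the commutation already established.

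The main obstacle will be step (iii) and, to a lesser extent, the mixed cases of the nucleus check. Because we only have alternativity (not associativity), expanding $[a, x_{21}]\,y_{12}$ produces several associator residues simultaneously rather than a single clean term, and identifying the correct pairing that cancels them through the cyclic/antisymmetric identities of $(\cdot,\cdot,\cdot)$ is the delicate part of the proof; once that is in place, conditions (1)--(3) deliver the desired conclusions and hence $a\in\mathcal{Z}(\mathfrak{R})$.
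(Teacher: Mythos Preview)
Your approach mirrors the paper's: extract $a_{11}x_{12}=x_{12}a_{22}$ from the hypothesis, then test commutation with each Peirce component by multiplying against an element of $\mathfrak{R}_{12}$ and invoking conditions (1)--(3). Two small divergences worth noting: the paper handles your step (iii) by exactly the same clean chain of equalities as (i) and (ii) (writing $(a_{22}x_{21})y_{12}=a_{22}(x_{21}y_{12})=(x_{21}y_{12})a_{22}=x_{21}(y_{12}a_{22})=x_{21}(a_{11}y_{12})=(x_{21}a_{11})y_{12}$ via two applications of $(\ast)$), so no special associator obstacle materializes there; and the paper does not carry out a separate nucleus verification, concluding $a_{11}+a_{22}\in\mathcal{Z}(\mathfrak{R})$ directly from $[a_{11}+a_{22},\mathfrak{R}]=0$.
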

\begin{proof}
We will prove only $(\spadesuit)$ because $(\clubsuit)$ it is similar to $(\spadesuit)$.
For any $x_{11} \in \R_{11}$ and $y_{12} \in \R_{12}$, by Proposition \ref{Lflexivel} we have
\begin{eqnarray*}
(a_{11}x_{11})y_{12} &=& a_{11}(x_{11}y_{12}) =(x_{11}y_{12})a_{22} =x_{11}(y_{12}a_{22}) =x_{11}(a_{11}y_{12}) \\&=& (x_{11}a_{11})y_{12}.
\end{eqnarray*}
It follows from $(2)$ that $a_{11}x_{11} = x_{11}a_{11}$.
Now for any $x_{12} \in \R_{12}$ and $y_{22} \in \R_{22}$, by Proposition \ref{Lflexivel}
\begin{eqnarray*}
x_{12}(y_{22}a_{22}) &=& (x_{12}y_{22})a_{22} = a_{11}(x_{12}y_{22}) = (a_{11}x_{12})y_{22}= (x_{12}a_{22})y_{22} \\&=& x_{12}(a_{22}y_{22}).
\end{eqnarray*}
By $(3)$, we see that $a_{22}y_{22} = y_{22}a_{22}.$
Let $x_{21} \in \R_{21}$ and $y_{12} \in \R_{12}$ be arbitrary. Applying identity above and Proposition \ref{Lflexivel}, we get
\begin{eqnarray*}
(a_{22}x_{21})y_{12} &=& a_{22}(x_{21}y_{12}) = (x_{21}y_{12})a_{22} = x_{21}(y_{12}a_{22}) = x_{21}(a_{11}y_{12}) \\&=& (x_{21}a_{11})y_{12},
\end{eqnarray*}
which, by $(1)$, implies that $a_{22}x_{21} = x_{21}a_{11}.$
Now, for any $x \in \R$, using identities above we get
\begin{eqnarray*}
(a_{11} + a_{22})x &=& (a_{11} + a_{22})(x_{11} + x_{12} + x_{21} + x_{22})
\\&=& a_{11}x_{11} + a_{11}x_{12} + a_{22}x_{21} + a_{22}x_{22}
\\&=& x_{11}a_{11} + x_{12}a_{22} + x_{21}a_{11} + x_{22}a_{22}
\\&=& (x_{11} + x_{12} + x_{21} + x_{22})(a_{11} + a_{22})
\\&=& x(a_{11} + a_{22}).
\end{eqnarray*}
Thus $a_{11} + a_{22} \in \mathcal{Z}(\R)$.
\end{proof}

\begin{proposition}\label{prop3}
We have $\mathcal{Z}(\R_{ij}) \subseteq \R_{ij} + \mathcal{Z}(\R)$ with $i \neq j$.
\end{proposition}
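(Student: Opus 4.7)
The plan is to interpret $\mathcal{Z}(\R_{ij})$ as the centralizer in $\R$ of $\R_{ij}$: take an arbitrary $a\in\R$ with $[a,x_{ij}]=0$ for every $x_{ij}\in\R_{ij}$, expand it as $a=a_{11}+a_{12}+a_{21}+a_{22}$ in the Peirce decomposition, and read off what the commuting hypothesis forces on each Peirce piece. By symmetry I would only treat $i=1,\ j=2$; the other case is identical after swapping the roles of the two Peirce components and using $(\clubsuit)$ in place of $(\spadesuit)$.

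The first step is to expand $ax_{12}-x_{12}a=0$ using the multiplicative relations (i)--(iii). Relation (iii) collapses $a_{22}x_{12}$ and $x_{12}a_{11}$ to zero, and the surviving six summands fall cleanly into the four Peirce components. Matching components yields
\begin{equation*}
a_{21}x_{12}=0,\quad x_{12}a_{21}=0,\quad [a_{12},x_{12}]=0,\quad a_{11}x_{12}=x_{12}a_{22},
\end{equation*}
each required to hold for every $x_{12}\in\R_{12}$.

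Next, the equation $a_{21}x_{12}=0$ for all $x_{12}$ says $a_{21}\R_{12}=0$, and hypothesis (1) immediately forces $a_{21}=0$. The fourth equation, when rewritten using $a_{22}x_{12}=0=x_{12}a_{11}$, is literally $[a_{11}+a_{22},\,x_{12}]=0$; applying Proposition \ref{prop2}$(\spadesuit)$ then delivers $a_{11}+a_{22}\in\mathcal{Z}(\R)$. Assembling, $a=a_{12}+(a_{11}+a_{22})\in\R_{12}+\mathcal{Z}(\R)$, which is the claim. The leftover identity $[a_{12},x_{12}]=0$ is a free byproduct that plays no role in the decomposition.

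The only move beyond routine Peirce bookkeeping is recognizing that the $\R_{12}$-component equation $a_{11}x_{12}=x_{12}a_{22}$ is exactly the commutator identity that Proposition \ref{prop2} converts into centrality; this is the bridge that lets the earlier proposition do the work, and everything else is mechanical.
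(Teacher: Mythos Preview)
Your proof is correct and follows essentially the same approach as the paper: decompose $a$ via the Peirce decomposition, read off from $[a,\R_{12}]=0$ that $a_{21}\R_{12}=0$ (hence $a_{21}=0$ by condition~(1)) and that $[a_{11}+a_{22},\R_{12}]=0$ (hence $a_{11}+a_{22}\in\mathcal{Z}(\R)$ by Proposition~\ref{prop2}$(\spadesuit)$), then reassemble. You simply spell out the component-matching step in more detail than the paper does.
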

\begin{proof}
We will make just the case $i=1$, $j=2$ because the other case it is similar.
For any $a \in \mathcal{Z}(\R_{12})$ with $a = a_{11} + a_{12} + a_{21} + a_{22}$ we have 
$[a,r_{12}] = 0$ which implies $[a_{11}+a_{22},r_{12}] = 0$ and $a_{21}r_{12} = 0$ for all $r_{12} \in \R_{12}$.
By item $(\spadesuit)$ of the Proposition \ref{prop2} and item $(1)$ follows that $a_{11} + a_{22} \in \mathcal{Z}(\R)$ and $a_{21} = 0$.
Therefore $a = a_{11} + a_{12} + a_{21} + a_{22} = a_{12} + a_{11} + a_{22} \in \R_{12} + \mathcal{Z}(\R)$
\end{proof}

There are several results on the characterizations of Lie derivations on
associative rings. The first characterization on Lie derivations is due to Martindale III, see \cite{Mart}, who proved the following result in $1964$.

\begin{theorem}(Martindale III)
Let $L$ be a Lie derivation of a primitive ring $R$ into itself,
where $R$ contains a nontrivial idempotent and the characteristic of $R$ is not $2$ then
every Lie derivation $L$ of $R$ is of the form $L = D + T$, where $D$ is an ordinary
derivation of $R$ into a primitive ring $\bar{R}$
containing $R$ and $T$ is an additive mapping
of $R$ into the center of $\bar{R}$
that maps commutators into zero.
\end{theorem}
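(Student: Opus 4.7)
My plan is to follow the classical Peirce-decomposition approach that underlies Martindale's original argument. First, using the nontrivial idempotent $e_1$ and its complement $e_2 = 1 - e_1$ (formed in $R$ or in its unitization), one produces a Peirce decomposition $R = R_{11} \oplus R_{12} \oplus R_{21} \oplus R_{22}$, exactly as in the setup already established for alternative rings earlier in this paper. Applying the Lie derivation identity to $a_{12} = [e_1, a_{12}]$ and $a_{21} = -[e_1, a_{21}]$ forces $L(e_1)$ into a very restricted shape and describes how $L$ shifts the off-diagonal components.

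Next, I would decompose $L(e_1) = m + z$ with $m \in R_{12} + R_{21}$ and $z$ \emph{central} in a suitable sense, and use the fact that the inner map $\mathrm{ad}(m)\colon x \mapsto [m, x]$ is itself a Lie derivation. Subtracting it yields $D_0 := L - \mathrm{ad}(m)$, which vanishes on $e_1$ up to a central correction and behaves diagonally on the Peirce pieces. I would then bootstrap $D_0$ through the mixed brackets $[a_{ii}, x_{ij}]$ to produce a genuine associative derivation $D$, and set $T := L - D$. By construction $T([x,y]) = L([x,y]) - D([x,y]) = 0$, so the whole content reduces to showing (a) that $D$ is indeed a derivation and (b) that $T$ takes values in the center.

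The principal obstacle is controlling the diagonal behaviour: the Lie product of two elements of $R_{ii}$ stays inside $R_{ii}$, so the Lie identity alone does not pin down $D$ on $R_{11}$ or $R_{22}$. Additivity and the Leibniz rule on the diagonal blocks must be extracted from the interaction with off-diagonal pieces, and this is exactly where primitivity of $R$ enters decisively, through implications of the form $rRs = 0 \Rightarrow r = 0 \text{ or } s = 0$ analogous to Theorem \ref{meu}. It is also the reason one cannot expect $D$ to land in $R$: the ``inner adjustment'' that turns $L$ into a derivation naturally lives in a larger primitive ring $\bar{R}$ (a Martindale-type ring of quotients), and the central values of $T$ sit in $\mathcal{Z}(\bar{R})$ rather than in $\mathcal{Z}(R)$.

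Once $D$ is known to be an additive derivation $R \to \bar{R}$, showing $T(R) \subseteq \mathcal{Z}(\bar{R})$ is comparatively mechanical: one computes $[T(x), y]$ for arbitrary $y$, substitutes $L = D + T$, and uses primeness of $\bar{R}$ to conclude. The hard part I anticipate is thus not the formal bookkeeping on the Peirce grid but the construction of $\bar{R}$ and the verification that each step of the Peirce-wise bootstrap is actually meaningful there; the alternative-ring analogue pursued in this paper presumably has to replace $\bar{R}$ by exploiting the axioms (1)--(4) imposed on $\R$ so that the ambient ring is rich enough without passing to quotients.
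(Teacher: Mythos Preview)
The paper does not prove this theorem at all: Martindale's result is quoted only as historical background, with the proof deferred to the original reference \cite{Mart}. There is therefore nothing in the paper to compare your proposal against. Your outline is a reasonable sketch of the classical Peirce-decomposition argument, and it does parallel the strategy the authors carry out for their own Theorem~\ref{mainthm} in the alternative setting; but for the statement at hand the paper simply cites Martindale rather than reproving it.
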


In $1993$, Bre$\check{s}$ar generalized the above characterization of Lie derivations on primitive rings to those on prime rings, see \cite{bresar}. He obtained the following theorem for associative rings.

\begin{theorem}(Bre$\check{s}$ar) Let $R$ be a prime ring of characteristic not $2$. Let $d$ be a Lie
derivation of $R$. If $R$ does not satisfy $S_4$, then $d$ is of the form $\delta + \tau$, where $\delta$ is a
derivation of $R$ into its central closure and $\tau$ is an additive mapping of $R$ into its
extended centroid sending commutators to zero.
\end{theorem}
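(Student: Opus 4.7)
The plan is to reduce to a situation where a nontrivial idempotent is available and then imitate the Peirce-component analysis developed earlier in the paper, postponing until the end the genuinely delicate case in which $R$ carries no such idempotent. If $R$ contains a nontrivial idempotent $e$, write $R = R_{11} \oplus R_{12} \oplus R_{21} \oplus R_{22}$. The guiding idea is to produce a correction term $T$ in the extended centroid (or, in the idempotent-rich case, simply an element of $R$) so that $\delta := d - \mathrm{ad}_T$ preserves each $R_{ij}$ modulo the center, and then define $\tau$ as the central residue of $d$ on each piece.

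First I would feed the Lie-derivation identity with carefully chosen arguments to extract off-diagonal information. Applying $d$ to $[e, x_{12}] = x_{12}$ and $[e, x_{21}] = -x_{21}$ shows that the $R_{11}$- and $R_{22}$-parts of $d(e)$ control how $d$ moves $R_{12}$ and $R_{21}$; this lets me write $d(e) = z + t$ with $z$ central and $t$ an element whose adjoint absorbs the off-diagonal shifts. Replacing $d$ by $d - \mathrm{ad}_t$ one may assume $d(e) \in \mathcal{Z}$. Next, expanding $d([x_{12},y_{21}])$ and $d([e,x_{ii}])$ gives that $d$ sends $R_{ij}$ into $R_{ij} + \mathcal{Z}$ for $i\neq j$ and $R_{ii}$ into $R_{11}+R_{22}+\mathcal{Z}$; then the criteria $(1)$--$(3)$ plus Proposition \ref{prop2} and Proposition \ref{prop3} let me split off the central residue to define $\tau$ on each component. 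Additivity of $\delta = d - \tau$ and the Leibniz rule $\delta(xy) = \delta(x)y + x\delta(y)$ are then checked by the standard pattern: verify the Leibniz rule on products $R_{ij}R_{jk}$ using the identity $[x_{ij},y_{jk}] = x_{ij}y_{jk}$ when $i\neq k$, then assemble the result on all of $R$ by biadditivity; the fact that $\tau$ annihilates commutators is immediate from the Lie-derivation property applied to $[x,y]$.

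The main obstacle is the case where $R$ has no nontrivial idempotent, which is exactly where Brešar's $S_4$-hypothesis enters. Here one cannot directly use Peirce components; instead one must pass to the central closure $RC$, where the non-$S_4$ assumption supplies the functional-identity machinery needed to guarantee that any additive map obeying the Lie rule can be expressed as a derivation plus a central map. Concretely, from $d([x,y]) = [d(x),y] + [x,d(y)]$ one derives a generalized functional identity of the form $F(x)y - y\,G(x) + p(x,y) \in C$; Brešar's theory then forces $F$ and $G$ to coincide with a derivation up to a central-valued additive map, provided $R$ is not $4$-dimensional over its extended centroid (the $S_4$ condition). Adapting this step, rather than the idempotent-case bookkeeping, is the genuinely technical obstruction, and is the reason the statement is quoted rather than reproved.
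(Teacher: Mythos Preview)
The paper does not prove this theorem at all: it is quoted verbatim from Bre\v{s}ar's 1993 paper \cite{bresar} as historical background and motivation, with no accompanying proof. There is therefore no ``paper's own proof'' to compare your proposal against. You seem to recognize this yourself in your final sentence, where you note that the statement ``is quoted rather than reproved.''

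That said, a brief comment on the content of your sketch: the idempotent-rich portion of your outline is essentially a rehearsal of the paper's own main argument (Lemmas~\ref{lema2}--\ref{lema8}) transplanted to the associative prime setting, which is fine as far as it goes but is not Bre\v{s}ar's proof. Bre\v{s}ar's actual argument does not proceed via Peirce decomposition at all; it relies on his theory of commuting traces of biadditive maps and functional identities on prime rings, and the hypothesis that $R$ does not satisfy $S_4$ is used globally (not only in an ``idempotent-free'' residual case) to rule out the low-dimensional degeneracies where such identities collapse. Your two-case split into ``has an idempotent / has no idempotent'' is not how the original proof is organized, and the functional-identity step you defer to the end is in fact the entire substance of Bre\v{s}ar's method.
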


As for characterizations on Lie derivable mappings on operator algebras, the
following result is proved in \cite{lu}.
\begin{theorem}
Let $X$ be a Banach space of dimension greater than $1$ and $\delta$ be a
Lie derivable mapping of $B(X)$ into itself. Then $\delta = D+\tau$ , where $D$ is an additive
derivation and $\tau$ is a map from $B(X)$ into $\mathbb{F}I$ vanishing at commutators.
\end{theorem}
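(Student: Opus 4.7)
The plan is to exploit the abundance of idempotents and rank-one operators in $B(X)$ to mimic, in an operator-algebraic setting, the Peirce decomposition strategy that underlies Propositions \ref{prop2} and \ref{prop3}. Fix a nontrivial idempotent $P \in B(X)$---for instance, any rank-one projection, whose existence is guaranteed because $\dim X > 1$---and write $B(X) = B_{11} \oplus B_{12} \oplus B_{21} \oplus B_{22}$ with $B_{ij} = P_i B(X) P_j$, where $P_1 = P$ and $P_2 = I - P$. The goal is to construct an additive derivation $D$ which agrees with $\delta$ on commutators and to show that $\tau := \delta - D$ takes values in $\mathbb{F} I$ and kills every commutator.

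First, I would probe $\delta$ against Lie brackets of the form $[P,A]$, $[A_{12}, B_{21}]$, $[A_{12}, B_{22}]$, and $[A_{11}, B_{12}]$. The identities $[P,A_{12}] = A_{12}$, $[P,A_{21}] = -A_{21}$, $[P,A_{ii}] = 0$ fed into the Lie derivation identity yield explicit formulas for the Peirce components of $\delta(A_{ij})$ modulo a contribution from $\delta(P)$. Subtracting the inner Lie derivation implemented by (the off-diagonal part of) $\delta(P)$ and then a scalar correction sending $A$ to an appropriately chosen $\lambda(A) I \in \mathbb{F} I$ produces a candidate $D$ that preserves the Peirce decomposition in the sense that each $B_{ij}$ is mapped into itself up to the centre.

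Next, I would verify that $D$ is additive and satisfies the Leibniz rule $D(AB) = D(A) B + A D(B)$. Additivity is extracted from the Lie identity by comparing $\delta([A+B,C])$ with $\delta([A,C]) + \delta([B,C])$ for $C$ ranging through each Peirce component, in direct analogy with the computations in Proposition \ref{prop2}. The Leibniz rule then falls out of expanding $[A,BC] = [A,B]C + B[A,C]$ and $[AB,C] = A[B,C] + [A,C]B$ and matching Peirce blocks; associativity of $B(X)$ simplifies this step considerably compared with the alternative ring case. Finally, $\tau := \delta - D$ vanishes on commutators by construction, and since the centre of $B(X)$ equals $\mathbb{F} I$ (this is where $\dim X > 1$ enters), one concludes $\tau(A) \in \mathbb{F} I$ for every $A$.

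The main obstacle will be extracting additivity of $D$ from the hypothesis that $\delta$ is merely a Lie derivable \emph{mapping}, with no assumed linearity. The standard device is to show that the defect $\delta(A+B) - \delta(A) - \delta(B)$ commutes with enough elements (a Peirce-generated subset of $B(X)$) to lie in the centralizer of a generating set, hence in $\mathbb{F} I$, and then to absorb that scalar error into $\tau$. This step parallels the component-by-component additivity arguments that are needed in the alternative-ring analogue pursued in the main body of the paper, and it is the place where the rich supply of rank-one operators in $B(X)$ is essential.
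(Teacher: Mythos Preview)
This theorem is not proved in the present paper. It is quoted from \cite{lu} as background and motivation for the main result on alternative rings; the paper states it without argument and then moves on to its own Theorem~\ref{mainthm}. Consequently there is no ``paper's own proof'' of this statement to compare your proposal against.

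That said, your outline is precisely the associative, operator-algebra specialisation of the strategy the paper \emph{does} carry out for alternative rings in Lemmas~\ref{lema1}--\ref{lema8}: fix a nontrivial idempotent, use the Peirce decomposition, correct $\delta$ by an inner derivation so that $\delta(P)$ becomes central, show $\delta$ maps each Peirce block into itself modulo the centre, define $D$ blockwise, prove additivity via an ``almost additive'' step (defect lies in the centre), and check the Leibniz rule block by block. So while your sketch cannot be matched against a proof that is absent here, it is entirely consonant with the method of the paper's main theorem and with the original argument in \cite{lu}. If you want a genuine comparison, you would need to consult \cite{lu} directly.
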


In view of the above, this motivated us to study the characterization of Lie multiplicative derivation on alternative rings.

\section{Main theorem}

We shall prove as follows the main result of this paper.

\begin{theorem}\label{mainthm} 
Let $\R$ be an unital alternative ring with nontrivial idempotent and $\D \colon  \R \rightarrow \R$ is a Lie multiplicative derivation. Then
$\D$ is the form $\delta + \tau$, where $\delta$ is an additive derivation of $\R$ and $\tau$ is a map from $\R$ into its center $\mathcal{Z}(\R)$, which maps
commutators into the zero if and only if
\begin{enumerate}
\item[\it a)] $e_2\D(\R_{11})e_2 \subseteq \mathcal{Z}(\R) e_2,$
\item[\it b)] $e_1\D(\R_{22})e_1 \subseteq \mathcal{Z}(\R) e_1.$
\end{enumerate}
\end{theorem}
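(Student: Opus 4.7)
For the forward implication, suppose $\D=\delta+\tau$ with $\delta$ an additive derivation and $\tau\colon\R\to\mathcal{Z}(\R)$ killing commutators. The key observation is that any additive derivation satisfies $\delta(e_1)=\delta(e_1)\,e_1+e_1\,\delta(e_1)$, and combining this with flexibility $(e_1,\delta(e_1),e_1)=0$ gives $e_1\delta(e_1)e_1=0$; the same argument applied to $e_2=1-e_1$ yields $e_2\delta(e_1)e_2=0$. Hence $\delta(e_1)\in\R_{12}+\R_{21}$. For $r_{11}\in\R_{11}$ one writes $\delta(r_{11})=\delta(e_1)r_{11}+e_1\delta(r_{11})$ and, using the Peirce multiplication rules plus the alternative identity $(e_2,e_1,\delta(r_{11}))=0$, deduces $e_2\delta(r_{11})e_2=0$. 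Therefore $e_2\D(r_{11})e_2=e_2\tau(r_{11})e_2$, which sits in $\mathcal{Z}(\R)e_2$ because $\tau(r_{11})\in\mathcal{Z}(\R)\subseteq\mathcal{N}(\R)$. This establishes (a); (b) follows by the symmetric argument.

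The converse is the substantial direction and I would attack it through a sequence of Peirce claims. First, writing $\D(e_1)=d_{11}+d_{12}+d_{21}+d_{22}$ and exploiting $\D([e_1,x_{ij}])=[\D(e_1),x_{ij}]+[e_1,\D(x_{ij})]$ on elements of each $\R_{ij}$, I would show that $d_{11}\in\mathcal{Z}(\R)e_1$ and $d_{22}\in\mathcal{Z}(\R)e_2$ (this is exactly where hypotheses (a), (b) are forced into play, via Propositions \ref{prop2} and \ref{prop3}). After subtracting from $\D$ the inner-derivation-like map $x\mapsto[x,d_{12}-d_{21}]$ together with a central correction, one obtains an auxiliary Lie multiplicative derivation $\D'$ with $\D'(e_1)=0$, and correspondingly $\D'(e_2)\in\mathcal{Z}(\R)$.

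With $\D'(e_1)=0$ in hand, the Lie derivation relation applied to $[e_1,x_{ij}]=\pm x_{ij}$ (for $i\neq j$) forces $\D'(\R_{ij})\subseteq\R_{ij}$, while applied to $[e_1,x_{ii}]=0$ and combined with conditions (a), (b), it shows that the off-diagonal components of $\D'(x_{ii})$ vanish and the opposite-diagonal component lies in $\mathcal{Z}(\R)$. Next I would verify additivity component by component: additivity on $\R_{ij}$ $(i\neq j)$ is obtained by expanding $\D'([e_i,x+y])$ in two ways, while additivity of $\D'$ across the four Peirce summands follows from commutator identities of the form $[e_1,x_{12}+x_{21}+\cdots]$. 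The Leibniz rule is then checked on each of the product cases $\R_{ii}\R_{ij}$, $\R_{ij}\R_{jj}$, $\R_{ij}\R_{ji}$, $\R_{ii}\R_{ii}$ by using the already-known Peirce behaviour of $\D'$ together with the flexibility identity of Proposition \ref{Lflexivel} to control associators; condition (4) is used to extract that central scalars are absorbed correctly. Setting $\delta$ equal to the Peirce-preserving, additive, Leibniz part of $\D'$ (plus the compensating inner piece) and $\tau=\D-\delta$, one deduces $\tau(\R)\subseteq\mathcal{Z}(\R)$ and $\tau([x,y])=0$ from $\D([x,y])=[\delta(x),y]+[x,\delta(y)]$.

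The principal obstacle is the passage from the Lie identity to the genuine Leibniz identity for products lying inside a single Peirce component, in particular $\R_{ij}\R_{ji}\subseteq\R_{ii}$, where the absence of associativity prevents the usual commutator trick from closing. I expect this case to demand a double-Peirce decomposition of $\D'(x_{ij}y_{ji})$ and systematic use of hypotheses (a), (b), (4), together with the linearized flexibility relation $(x,y,z)+(z,y,x)=0$, in order to show that the obstruction lives in the centre and can be absorbed into $\tau$.
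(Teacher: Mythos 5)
Your ``forward'' direction (decomposition implies (a),(b)) is correct and is essentially the paper's computation, which simply expands $e_2\delta(e_1a_{11})e_2$ by the Leibniz rule and the middle Moufang identity and kills both terms using $a_{11}e_2=0$ and $e_2e_1=0$; your detour through $\delta(e_1)\in\R_{12}+\R_{21}$ is harmless (note that extracting $e_1\delta(e_1)e_1=0$ from $\delta(e_1)=\delta(e_1)e_1+e_1\delta(e_1)$ needs $2$-torsion freeness, which the paper assumes globally). Your outline of the converse also tracks the paper's strategy step for step: normalize $\D(e_1)$, show $\D(\R_{ij})\subseteq\R_{ij}$ and $\D(\R_{ii})\subseteq\R_{ii}+\mathcal{Z}(\R)$ (the latter is where (a),(b) actually enter; the analysis of $\D(e_1)$ itself only needs Proposition \ref{prop2}), establish almost-additivity, define $\delta$ on Peirce components and $\tau=\D-\delta$, and verify the Leibniz rule case by case, with the $\R_{ij}\R_{ji}$ case handled via $[[a_{ij},b_{ji}],a_{ij}]=2a_{ij}b_{ji}a_{ij}$ and hypothesis (4), exactly as in Lemma \ref{lema6}(V).

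The genuine gap is the normalization step. You propose to subtract the map $x\mapsto[x,d_{12}-d_{21}]$. In an alternative ring $\mathrm{ad}_a$ is not a derivation unless $a$ is nuclear, and it is not even a Lie derivation: for the commutator one has $[[x,y],z]+[[y,z],x]+[[z,x],y]=6(x,y,z)$, so the Jacobi identity fails precisely by the associator, and subtracting $\mathrm{ad}_{d_{12}-d_{21}}$ from $\D$ need not leave a Lie multiplicative derivation; the entire subsequent analysis of your $\D'$ then has no foundation. The paper avoids this by subtracting the standard inner derivation of alternative rings, $f_{y,z}=[L_y,L_z]+[L_y,R_z]+[R_y,R_z]$ with $y=\D(e_1)_{12}+\D(e_1)_{21}$ and $z=-e_1$, which is an honest (hence Lie) derivation by Schafer and satisfies $f_{y,z}(e_1)=\D(e_1)_{12}+\D(e_1)_{21}$, so that $\D-f_{y,z}$ sends $e_1$ into $\mathcal{Z}(\R)$. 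Beyond this, your converse is only a plan: almost-additivity (which the paper imports from \cite{posd}) and the five Leibniz cases are asserted rather than proved, so even with the corrected inner derivation the hard direction remains to be carried out.
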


Firstly let us assume that Lie multiplicative derivation $\D \colon  \R \rightarrow \R$ satisfies $a)$ and $b)$.
The following Lemmas has the same hypotheses of Theorem \ref{mainthm} and we need these Lemmas for the proof of the first part this Theorem. Thus, let us consider $e_{1}$ a nontrivial idempotent of $\mathfrak{R}$. We started with the following

\begin{lemma}\label{lema1} $\D(0) = 0$.
\end{lemma}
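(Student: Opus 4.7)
The plan is to use the defining identity of a Lie multiplicative derivation at the trivial input $x=y=0$. Since $[0,0]=0$, applying $\D$ and expanding via the Lie Leibniz rule gives
\begin{equation*}
\D(0) = \D([0,0]) = [\D(0),0] + [0,\D(0)].
\end{equation*}
Both commutators on the right are zero because $[r,0]=r\cdot 0 - 0\cdot r = 0$ for every $r \in \R$ (no associativity is needed, only that $0$ annihilates under multiplication). Hence $\D(0)=0$.

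I do not anticipate any obstacle: the argument is a one-line substitution and uses none of the structural hypotheses (idempotent, torsion freeness, or conditions (1)--(4)) beyond the bare definition of a Lie multiplicative derivation on a ring with zero element.
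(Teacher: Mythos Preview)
Your proof is correct and matches the paper's argument exactly: both compute $\D(0)=\D([0,0])=[\D(0),0]+[0,\D(0)]=0$.
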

\begin{proof}
In deed, $\D(0) = \D([0,0]) = [\D(0),0] + [0, \D(0)] = 0$. 
\end{proof} 

\begin{lemma}\label{lema2} $\D(e_1) - f_{y,z}(e_1) \in \mathcal{Z}(\R)$, with $y= \D(e_1)_{12} + \D(e_1)_{21}$, $z = -e_1$ where $f_{y,z} := [L_y, L_z] + [L_y, R_z] + [R_y, R_z]$ and $L$, $R$ are left and right multiplication operators respectively.
\end{lemma}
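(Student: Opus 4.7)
\emph{Plan.} The strategy is to exploit the Peirce decomposition of $\D(e_1)$: I aim to show that the diagonal components $\D(e_1)_{11}$ and $\D(e_1)_{22}$ are given by $z_0 e_1$ and $z_0 e_2$ for a common central element $z_0 \in \mathcal{Z}(\R)$, and that the map $f_{y,z}$ evaluated at $e_1$ accounts precisely for the off-diagonal components $\D(e_1)_{12} + \D(e_1)_{21}$, so that the difference $\D(e_1) - f_{y,z}(e_1)$ reduces to $z_0 \cdot 1 \in \mathcal{Z}(\R)$.

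First I would write $\D(e_1) = B_{11} + B_{12} + B_{21} + B_{22}$ with $B_{ij} \in \R_{ij}$, so that the element $y$ in the lemma equals $B_{12} + B_{21}$. Since $e_1 \in \R_{11}$, hypothesis $a)$ of Theorem \ref{mainthm} yields $B_{22} = e_2\,\D(e_1)\,e_2 = z_0 e_2$ for some $z_0 \in \mathcal{Z}(\R)$.

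To identify $B_{11}$, I would apply the Lie multiplicative derivation property to the identity $[e_1, x_{12}] = x_{12}$ for arbitrary $x_{12} \in \R_{12}$:
\[\D(x_{12}) = [\D(e_1), x_{12}] + [e_1, \D(x_{12})].\]
Expanding both sides in Peirce blocks, the $(1,2)$-component of $[e_1, \D(x_{12})]$ equals $\D(x_{12})_{12}$ and cancels with the $(1,2)$-component of the left-hand side; what remains is the identity
\[B_{11}\, x_{12} = x_{12}\, B_{22}.\]
Substituting $B_{22} = z_0 e_2$ and using that $z_0 \in \mathcal{N}(\R)$ commutes and associates with everything, so that $x_{12}(z_0 e_2) = z_0 x_{12} = (z_0 e_1)x_{12}$, this becomes $(B_{11} - z_0 e_1)\R_{12} = 0$, and hypothesis $(2)$ forces $B_{11} = z_0 e_1$.

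Finally, I would compute $f_{y,z}(e_1)$ directly by expanding each of the three brackets $[L_y, L_{-e_1}]$, $[L_y, R_{-e_1}]$, $[R_y, R_{-e_1}]$ at $e_1$, using the Peirce identities $y_{12}e_1 = e_1 y_{21} = 0$, $y_{21}e_1 = y_{21}$, and $e_1 y_{12} = y_{12}$; the middle bracket collapses to $0$, while the outer two contribute the pieces needed to reconstruct $y = B_{12} + B_{21}$ (with the overall sign calibrated by the choice $z = -e_1$). Assembling the pieces one obtains
\[\D(e_1) - f_{y,z}(e_1) = B_{11} + B_{22} = z_0 e_1 + z_0 e_2 = z_0 \cdot 1 = z_0 \in \mathcal{Z}(\R).\]
The main obstacle is the Peirce-component accounting in the middle step: one must track carefully which $\R_{kl}$ each cross-term in $[\D(e_1), x_{12}]$ lands in, isolate the $(1,2)$-block identity cleanly, and then transfer the central element $z_0$ controlling $B_{22}$ over to $B_{11}$ through hypothesis $(2)$. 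Only once the two diagonal blocks are shown to be governed by the same $z_0$ do they combine into the unit element times $z_0$ in the final cancellation.
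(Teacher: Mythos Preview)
Your argument is correct, and the Peirce bookkeeping you outline (extracting the $(1,2)$–block identity $B_{11}x_{12}=x_{12}B_{22}$ from $\D(x_{12})=[\D(e_1),x_{12}]+[e_1,\D(x_{12})]$, and checking that $f_{y,z}(e_1)$ picks off exactly the off-diagonal part $B_{12}+B_{21}$) matches what the paper does. The one genuine difference is how you handle the diagonal part. The paper simply rewrites $B_{11}x_{12}=x_{12}B_{22}$ as $[B_{11}+B_{22},\R_{12}]=0$ and invokes Proposition~\ref{prop2}\,$(\spadesuit)$ to conclude $B_{11}+B_{22}\in\mathcal{Z}(\R)$ in one stroke. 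You instead bring in hypothesis~$a)$ of Theorem~\ref{mainthm} to identify $B_{22}=z_0e_2$ for some $z_0\in\mathcal{Z}(\R)$, and then use condition~$(2)$ to force $B_{11}=z_0e_1$. Both routes are valid; the paper's is more economical because Proposition~\ref{prop2} is already available and the lemma then does not depend on hypothesis~$a)$ at all (that hypothesis is only needed later, in Lemma~\ref{lema3}). Your version has the mild advantage of making the central element $z_0$ explicit, but at the cost of an extra input.
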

\begin{proof}
Firstly observe that
\begin{eqnarray*}
\D(a_{12}) &=& \D([e_1, a_{12}]) = [\D(e_1), a_{12}] + [e_1, \D(a_{12})] \\&=& \D(e_1)a_{12} - a_{12}\D(e_1) + e_1\D(a_{12}) - \D(a_{12})e_1
\end{eqnarray*}
In the above equation the left and right sides are, respectively, multiplied by $e_1$ and $e_2$, and we have $[\D(e_1)_{11} + \D(e_1)_{22} , a_{12}] = 0$ for all $a_{12} \in \R_{12}$ so by $(\spadesuit)$ of Proposition \ref{prop2} we get $\D(e_1)_{11} + \D(e_1)_{22} \in \mathcal{Z}(\R)$.
Taking $y= \D(e_1)_{12} + \D(e_1)_{21}$ and $z= -e_1$ we obtain $\D(e_1) - f_{y,z}(e_1) = \D(e_1)_{11} + \D(e_1)_{22} \in \mathcal{Z}(\R)$.
\end{proof}

Before we continue it is worth noting that $f_{y,z} := [L_y, L_z] + [L_y, R_z] + [R_y, R_z]$ is a derivation, by [$77$ page of \cite{sch}], so without loss of generality, we can assume that
$\D(e_1) \in \mathcal{Z}(\R)$

\begin{remark}\label{obs1}
If $\D(e_1) \in \mathcal{Z}(\R)$ then $\D(e_2) \in \mathcal{Z}(\R)$. Indeed, note that $[e_1, a]=[a, e_2]$ for all $a \in \R$. Thus 
\begin{eqnarray*}
[\D(e_2), a] &=& \D([e_2, a]) - [e_2, \D(a)] = \D([a,e_1]) - [e_2, \D(a)] \\&=& [\D(a), e_1] + [a, \D(e_1)] - [e_2, \D(a)] \\&=& [\D(a), e_1] - [e_2, \D(a)] \\&=& [\D(a), e_1] + [e_1, \D(a)] = 0
\end{eqnarray*}
for all $a \in \R$. Therefore $\D(e_2) \in \mathcal{Z}(\R)$. Moreover, $\D(a_{ij}) \in \R_{ij}$ with $i \neq j$, because $\D(a_{ij}) = \D([e_i, a_{ij}]) = [e_i, \D(a_{ij})]$.
\end{remark}

\begin{lemma}\label{lema3} $\D(\R_{ii}) \subseteq \R_{ii} + \mathcal{Z}(\R) \ (i = 1,2)$
\end{lemma}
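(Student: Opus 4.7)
The plan is to pin down the Peirce components of $\D(a_{11})$ for an arbitrary $a_{11} \in \R_{11}$ and then do the symmetric argument for $\R_{22}$. Let me write $\D(a_{11}) = b_{11} + b_{12} + b_{21} + b_{22}$ in the Peirce decomposition. The two inputs I have are: the Lie multiplicative property applied to $[e_1, a_{11}] = 0$, combined with the WLOG reduction $\D(e_1) \in \mathcal{Z}(\R)$ (and hence, by Remark \ref{obs1}, $\D(e_2) \in \mathcal{Z}(\R)$), and the hypothesis $a)$ of Theorem \ref{mainthm}.

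First I would apply $\D$ to $[e_1, a_{11}] = 0$. Using $\D(0)=0$ (Lemma \ref{lema1}) and the fact that $[\D(e_1), a_{11}] = 0$ since $\D(e_1)$ is central, this collapses to
\begin{equation*}
[e_1, \D(a_{11})] = 0.
\end{equation*}
Computing the two sides in Peirce coordinates gives $e_1\D(a_{11}) = b_{11} + b_{12}$ and $\D(a_{11})e_1 = b_{11} + b_{21}$, so $b_{12} - b_{21} = 0$. Because the Peirce decomposition is a direct sum and $b_{12} \in \R_{12}$, $b_{21} \in \R_{21}$, this forces $b_{12} = b_{21} = 0$. Hence $\D(a_{11}) = b_{11} + b_{22}$.

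Next I would invoke hypothesis $a)$: since $b_{22} = e_2\D(a_{11})e_2 \in \mathcal{Z}(\R)e_2$, there exists $z \in \mathcal{Z}(\R)$ with $b_{22} = ze_2$. Writing $e_2 = 1 - e_1$ and using that $z$ is central in the nucleus (so $ze_1 \in \R_{11}$), I obtain
\begin{equation*}
\D(a_{11}) = b_{11} + ze_2 = (b_{11} - ze_1) + z \in \R_{11} + \mathcal{Z}(\R),
\end{equation*}
which is the desired inclusion for $i=1$.

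The case $i=2$ is entirely symmetric: apply $\D$ to $[e_2, a_{22}] = 0$, use $\D(e_2) \in \mathcal{Z}(\R)$ to get $[e_2, \D(a_{22})] = 0$, which eliminates the $\R_{12}$ and $\R_{21}$ components of $\D(a_{22})$; then hypothesis $b)$ writes the $\R_{11}$-component as $z'e_1$ for some $z' \in \mathcal{Z}(\R)$, and rewriting $z'e_1 = z' - z'e_2$ puts $\D(a_{22})$ into $\R_{22} + \mathcal{Z}(\R)$. There is no real obstacle here, everything is direct once one notices that $[e_1,\D(a_{11})]=0$; the only point worth a moment of care is checking that $ze_i$ indeed lies in $\R_{ii}$, which is immediate from centrality and the relation $e_ia\cdot e_j = e_i\cdot ae_j$ recorded earlier.
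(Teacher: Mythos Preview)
Your proof is correct and follows essentially the same route as the paper's: apply $\D$ to the vanishing commutator $[a_{11},e_1]=0$ using $\D(e_1)\in\mathcal{Z}(\R)$ to kill the off-diagonal Peirce components, then use hypothesis $a)$ to write $b_{22}=ze_2$ and absorb $ze_1$ into $\R_{11}$. The only differences are cosmetic (you spell out the $b_{12}-b_{21}=0$ step and the $i=2$ case in more detail).
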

\begin{proof}
We show just the case $i = 1$ because the other case can be treated similarly. For every $a_{11} \in \R_{11}$, with $\D(a_{11}) = b_{11} + b_{12} + b_{21} + b_{22}$ we get
$$0 = \D([a_{11}, e_1]) = [\D(a_{11}),e_1] + [a_{11}, \D(e_1)] = [\D(a_{11}),e_1].$$
From this $b_{12} = b_{21} = 0$. By Theorem \ref{mainthm} item $a)$, we have
$$\D(a_{11}) = b_{11} + e_2\D(a_{11})e_2 = b_{11} + ze_2 = b_{11} -e_1z + z \in \R_{11} + \mathcal{Z}(\R).$$
\end{proof}

\begin{lemma}\label{lema4}
$\D$ is almost additive map, that is, for every $a,b \in \R$, $\D(a+b) - \D(a) - \D(b) \in \mathcal{Z}(\R)$.
\end{lemma}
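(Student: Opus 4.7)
My plan is to show that the additivity defect $T(a,b) := \D(a+b) - \D(a) - \D(b)$ lies in $\mathcal{Z}(\R)$, proceeding stage by stage through the Peirce decomposition. Writing $a = \sum a_{ij}$ and $b = \sum b_{ij}$, the argument splits according to which Peirce components the summands inhabit, with the diagonal cases $\R_{ii}$ being the most delicate.

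First I would establish strict additivity on the off-diagonal pieces. For $a_{12}, b_{12} \in \R_{12}$, combining the identity $[e_1, a_{12}] = a_{12}$ with $\D(e_1) \in \mathcal{Z}(\R)$ (Lemma \ref{lema2}) and the inclusion $\D(\R_{12}) \subseteq \R_{12}$ (Remark \ref{obs1}) gives $\D(a_{12} + b_{12}) = [e_1, \D(a_{12} + b_{12})] \in \R_{12}$. Applying $\D$ to a suitable commutator whose value reproduces $a_{12} + b_{12}$ (built from $e_1, e_2, a_{12}, b_{12}$ and controlled via the Peirce multiplication rules, keeping in mind that $\R_{12}\R_{12} \subseteq \R_{21}$ may be nonzero in the alternative setting) yields strict additivity on $\R_{12}$; the case $\R_{21}$ is symmetric. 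Next I would deal with cross-component combinations: the identity $[e_1, a_{11} + b_{12}] = b_{12}$ together with the off-diagonal additivity just obtained, and the placement $\D(\R_{ii}) \subseteq \R_{ii} + \mathcal{Z}(\R)$ from Lemma \ref{lema3}, will give almost-additivity for each mixed-Peirce pair.

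The hardest case is the purely diagonal one: showing $T(a_{11}, b_{11}) \in \mathcal{Z}(\R)$ for $a_{11}, b_{11} \in \R_{11}$. The commutator-with-idempotent trick collapses here, since $[e_i, a_{11}] = 0$. Instead, I would compute $\D([a_{11} + b_{11}, x_{12}])$ for arbitrary $x_{12} \in \R_{12}$ in two ways: directly via Lie-multiplicativity, and by rewriting $[a_{11}+b_{11}, x_{12}] = [a_{11}, x_{12}] + [b_{11}, x_{12}] \in \R_{12}$ and applying the already-established $\R_{12}$ additivity. Comparing the two expressions should force $T(a_{11}, b_{11})$ to commute with every $x_{12}$; the symmetric computation against $x_{21} \in \R_{21}$ handles the other side. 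After stripping off the central part (ensured by Lemma \ref{lema3}), the residue lies in $\R_{11} \oplus \R_{22}$ and commutes with all of $\R_{12}$ and $\R_{21}$, so Proposition \ref{prop2} $(\spadesuit)$ and $(\clubsuit)$ forces it into $\mathcal{Z}(\R)$. A telescoping combination of the stages then yields the general statement. The delicate point throughout will be ordering the bootstrap so that each stage depends only on additivity already secured, and carefully tracking the associator terms that arise in the alternative (non-associative) manipulations and do not appear in the classical associative arguments.
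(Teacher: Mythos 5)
The first thing to note is that the paper does not actually prove this lemma: its ``proof'' is a citation to Theorem 4.1 of \cite{posd}, so there is no in-paper argument to measure yours against. Your outline does follow the standard Martindale/Lu--Liu bootstrap that such a proof uses, and the diagonal case is set up correctly: comparing $\D([a_{11}+b_{11},x_{12}])$ computed via Lie-multiplicativity with the expansion obtained from $\R_{12}$-additivity yields $[\D(a_{11}+b_{11})-\D(a_{11})-\D(b_{11}),x_{12}]=0$; the defect is diagonal because $\D(a_{11}+b_{11})$ commutes with $e_1$ (apply $\D$ to $[a_{11}+b_{11},e_1]=0$ and use $2$-torsion freeness); and Proposition \ref{prop2} then finishes that stage.

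However, the proposal is a plan rather than a proof, and the gap sits exactly where you flag the difficulty without resolving it. For additivity on $\R_{12}$ you invoke ``a suitable commutator whose value reproduces $a_{12}+b_{12}$'' but never exhibit one. The natural candidate $[e_1+a_{12},e_2+b_{12}]$ equals $a_{12}+b_{12}+2a_{12}b_{12}$ (linearizing $x_{12}^{2}=0$ gives $a_{12}b_{12}=-b_{12}a_{12}$, so $[a_{12},b_{12}]=2a_{12}b_{12}$), and the correction term $2a_{12}b_{12}\in\R_{21}$ is precisely the obstruction you mention; removing it requires almost-additivity for pairs of the form $(e_1,a_{12})$, $(e_2,b_{12})$ and for an $\R_{12}$-plus-$\R_{21}$ pair as \emph{prior} input. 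That is incompatible with your proposed ordering, in which off-diagonal additivity comes first and the mixed-Peirce pairs are handled afterwards ``together with the off-diagonal additivity just obtained''; as written the base case of the bootstrap is circular. You concede that ``the delicate point will be ordering the bootstrap,'' but that ordering, together with the explicit commutator identities that make each stage close, is essentially the whole content of the lemma. The final telescoping to general sums of up to eight Peirce components is likewise asserted rather than carried out.
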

\begin{proof}
For proof to see Theorem $4.1$ in \cite{posd}. 
\end{proof}

Now let us define the mappings $\delta$ and $\tau$. By Remark \ref{obs1} and Lemma \ref{lema3} we have that

\begin{enumerate}
\item[\it (A)] if $a_{ij} \in \R_{ij}$, $i \neq j$, then $\D(a_{ij}) = b_{ij} \in \R_{ij}$,
\item[\it (B)] if $a_{ii} \in \R_{ii}$, then $\D(a_{ii}) = b_{ii} + z, b_{ii} \in \R_{ii}$, $z$ is a central element.
 \end{enumerate} 
We note that in $(B)$, $b_{ii}$ and $z$ are uniquely determined. Indeed, if $\D(a_{ii}) = b'_{ii}+ z'$, $b'_{ii} \in \R_{ii}$, $z' \in \mathcal{Z}(\R)$. Then $b_{ii} − b'_{ii} \in \mathcal{Z}(\R)$. Hence by conditions $(2)$ and $(3)$, $b_{ii} = b'_{ii}$ and $z = z'$.
Now we define a map $\delta$ of $\R$ according to the rule $\delta(a_{ij}) = b_{ij}, a_{ij} \in \R_{ij}$. For every $a = a_{11} + a_{12} + a_{21} + a_{22} \in \R$, define $\delta(a) = \sum \delta(a_{ij})$. And a map $\tau$ of $\R$ into $\mathcal{Z}(\R)$ is then defined by
\begin{eqnarray*}
\tau(a) &=& \D(a) - \delta(a) \\&=& \D(a) - (\delta(a_{11}) + \delta(a_{12}) + \delta(a_{21}) + \delta(a_{22})) \\&=& \D(a) - (b_{11} + b_{12} + b_{21} + b_{22}) \\&=& \D(a) - (\D(a_{11}) - z_{a_{11}} + \D(a_{12}) + \D(a_{21}) + \D(a_{22}) - z_{a_{22}}) \\&=& \D(a) - (\D(a_{11}) + \D(a_{12}) + \D(a_{21}) + \D(a_{22}) - (z_{a_{11}} + z_{a_{22}})) \\&=& \D(a) - (\D(a_{11}) + \D(a_{12}) + \D(a_{21}) + \D(a_{22})). 
\end{eqnarray*}
We need to prove that $\delta$ and $\tau$ are desired maps. 

\begin{lemma}\label{lema5}
$\delta$ is an additive map.
\end{lemma}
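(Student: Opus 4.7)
The plan is to reduce additivity of $\delta$ on $\R$ to additivity of $\delta$ on each Peirce component $\R_{ij}$, and then to obtain the per-component statement directly from Lemma \ref{lema4} plus the auxiliary observation that $\mathcal{Z}(\R)\cap\R_{ij}=0$ for every $i,j\in\{1,2\}$.

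First I would record this auxiliary fact. When $i\neq j$, any central $z$ is nuclear, and writing $z=z_{11}+z_{12}+z_{21}+z_{22}$ in the Peirce decomposition, one finds $e_1z=z_{11}+z_{12}$ and $ze_1=z_{11}+z_{21}$; centrality $e_1z=ze_1$ then forces $z_{12}=z_{21}=0$, so $\mathcal{Z}(\R)\subseteq\R_{11}\oplus\R_{22}$. When $i=j$, a central $c\in\R_{ii}$ satisfies $c\R_{ij}\subseteq\R_{ij}$ while $\R_{ij}c\subseteq\R_{ij}\R_{ii}=0$, so $0=[c,\R_{ij}]=c\R_{ij}$; conditions $(2)$ and $(3)$ then give $c=0$. (This also justifies the uniqueness claim for the splitting in $(B)$.)

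Next I would establish additivity of $\delta$ on each Peirce piece. For $i\neq j$ one has $\delta=\D$ on $\R_{ij}$; Lemma \ref{lema4} places $\D(a_{ij}+b_{ij})-\D(a_{ij})-\D(b_{ij})$ in $\mathcal{Z}(\R)$, while Remark \ref{obs1} places each summand in $\R_{ij}$, so this element lies in $\R_{ij}\cap\mathcal{Z}(\R)=0$. For $i=j$, the defining splittings $\D(x_{ii})=\delta(x_{ii})+z_{x_{ii}}$ give
\[
\delta(a_{ii}+b_{ii})-\delta(a_{ii})-\delta(b_{ii})=\bigl[\D(a_{ii}+b_{ii})-\D(a_{ii})-\D(b_{ii})\bigr]-\bigl[z_{a_{ii}+b_{ii}}-z_{a_{ii}}-z_{b_{ii}}\bigr],
\]
which is central by Lemma \ref{lema4} and simultaneously lies in $\R_{ii}$, hence vanishes by the previous step.

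Finally, for arbitrary $a=\sum a_{ij}$ and $b=\sum b_{ij}$, the Peirce decomposition is a direct sum of abelian groups, so $(a+b)_{ij}=a_{ij}+b_{ij}$; combining this with the definition $\delta(x)=\sum\delta(x_{ij})$ and the per-piece additivity just obtained yields $\delta(a+b)=\sum\delta(a_{ij}+b_{ij})=\sum\delta(a_{ij})+\sum\delta(b_{ij})=\delta(a)+\delta(b)$. The only mildly delicate ingredient is the $\mathcal{Z}(\R)\cap\R_{ij}=0$ observation; once that is in hand, the rest is bookkeeping around Lemma \ref{lema4} and the Peirce decomposition, and I do not expect a substantial obstacle.
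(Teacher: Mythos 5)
Your proof is correct and follows essentially the same route as the paper: reduce to additivity on each Peirce component, apply Lemma \ref{lema4} to see the defect is central, and kill it via $\mathcal{Z}(\R)\cap\R_{ij}=\{0\}$. You simply spell out details the paper leaves implicit (the off-diagonal case and the verification that $\mathcal{Z}(\R)\cap\R_{ij}=\{0\}$), which is a welcome addition but not a different argument.
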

\begin{proof}
We only need to show that $\delta$ is an additive on $\R_{ii}$. Let $a_{ii}, b_{ii} \in \R_{ii}$,
\begin{eqnarray*}
\delta(a_{ii} + b_{ii}) - \delta(a_{ii}) - \delta(b_{ii}) &=& \D(a_{ii} + b_{ii}) - \tau(a_{ii} + b_{ii}) - \D(a_{ii}) \\&+& \tau(a_{ii}) - \D(b_{ii}) + \tau(b_{ii}).
\end{eqnarray*} 
Thus, $\delta(a_{ii} + b_{ii}) - \delta(a_{ii}) - \delta(b_{ii}) \in \mathcal{Z}(\R) \cap \R_{ii} = \left\{0\right\}$.
\end{proof}
Now we show that $\delta(ab) = \delta(a)b + a\delta(b)$ for all $a, b \in \R$.

\begin{lemma}\label{lema6}
For every $a_{ii}, b_{ii} \in \R_{ii}$, $a_{ij}, b_{ij} \in \R_{ij}$, $b_{ji} \in \R_{ji}$ and $b_{jj} \in \R_{jj}$ with $i \neq j$ we have
\begin{enumerate}
\item[\it (I)] $\delta(a_{ii}b_{ij}) = \delta(a_{ii})b_{ij} + a_{ii}\delta(b_{ij})$,
\item[\it (II)] $\delta(a_{ij}b_{jj}) = \delta(a_{ij})b_{jj} + a_{ij}\delta(b_{jj})$,
\item[\it (III)] $\delta(a_{ii}b_{ii}) = \delta(a_{ii})b_{ii} + a_{ii}\delta(b_{ii})$,
\item[\it (IV)] $\delta(a_{ij}b_{ij}) = \delta(a_{ij})b_{ij} + a_{ij}\delta(b_{ij})$,
\item[\it (V)] $\delta(a_{ij}b_{ji}) = \delta(a_{ij})b_{ji} + a_{ij}\delta(b_{ji}).$
\end{enumerate}
\end{lemma}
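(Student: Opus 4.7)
The plan is to prove (I)--(V) one at a time, each time by expressing the product as a Lie commutator (exactly, or after multiplication by a test element), applying the Lie-derivation identity of $\D$, and eliminating central corrections using the facts $\D(e_{i})\in\mathcal{Z}(\R)$, $\D(\R_{ij})\subseteq\R_{ij}$ for $i\neq j$, $\D(a_{ii})-\delta(a_{ii})\in\mathcal{Z}(\R)$, the almost additivity of $\D$ (Lemma \ref{lema4}), the Peirce multiplication rules (i)--(iv), and the cancellation conditions (1)--(3).

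Cases (I) and (II) are essentially immediate. Peirce rule (iii) yields $b_{ij}a_{ii}=0$ and $a_{jj}a_{ij}=0$ whenever $i\neq j$, so the products rewrite as Lie commutators $a_{ii}b_{ij}=[a_{ii},b_{ij}]$ and $a_{ij}b_{jj}=-[b_{jj},a_{ij}]$. Applying $\D$, the central piece of $\D(a_{ii})$ (resp.\ $\D(b_{jj})$) drops out of the bracket; the ``wrong-way'' cross-terms $b_{ij}\delta(a_{ii})$ and $\delta(b_{ij})a_{ii}$ vanish again by (iii); and the left-hand side equals $\delta$ of the product because the product lies in $\R_{ij}$.

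Case (IV) exploits the polarization of the Peirce identity $x_{ij}^{2}=0$ (item (iv)), which gives $a_{ij}b_{ij}+b_{ij}a_{ij}=0$. Hence $[a_{ij},b_{ij}]=2a_{ij}b_{ij}$, and the same polarization applied to the pairs $(\delta(a_{ij}),b_{ij})$ and $(a_{ij},\delta(b_{ij}))$ (both still in $\R_{ij}$) yields $[\delta(a_{ij}),b_{ij}]=2\delta(a_{ij})b_{ij}$ and $[a_{ij},\delta(b_{ij})]=2a_{ij}\delta(b_{ij})$. Inserting these into the Lie-derivation identity, and noting that any almost-additive central correction must lie in $\mathcal{Z}(\R)\cap\R_{ji}=\{0\}$, gives $2\delta(a_{ij}b_{ij})=2\delta(a_{ij})b_{ij}+2a_{ij}\delta(b_{ij})$; $2$-torsion freeness delivers (IV).

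For (III) and (V), the product is not itself a Lie commutator, and the plan is to multiply the target equality on the right by an arbitrary test element $c_{ij}\in\R_{ij}$ and invoke condition (2) or (3). In each case one computes $\delta$ of the product times $c_{ij}$ in two ways: first, via the commutator $[\mathrm{prod},c_{ij}]=\mathrm{prod}\cdot c_{ij}$ (the ``backwards'' composite vanishes by Peirce), giving $\delta(\mathrm{prod})c_{ij}+\mathrm{prod}\cdot\delta(c_{ij})$; and second, by re-associating via the alternative-ring identities and expanding using the already-proved (I), (II), (IV). For (III) the needed rearrangement is of the Peirce associator $(a_{ii},b_{ii},c_{ij})$ and leads, after applying (I) twice, to a clean cancellation; for (V) the key move is the Moufang-type identity $(a_{ij}b_{ji})c_{ij}=a_{ij}(b_{ji}c_{ij})+(c_{ij}a_{ij})b_{ji}$, obtained from the linearized right alternative law, followed by an application of (II) to $a_{ij}\cdot(b_{ji}c_{ij})$ (with $b_{ji}c_{ij}\in\R_{jj}$) and of (IV) to $(c_{ij}a_{ij})\cdot b_{ji}$ (with $c_{ij}a_{ij}\in\R_{ji}$). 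Matching the two computations yields an equation of the form (target error)$\cdot c_{ij}=0$ for all $c_{ij}\in\R_{ij}$, and condition (2) or (3) kills the error. The main obstacle lies in this last step: the alternative-ring bookkeeping of associators (most pronounced in (V), where one must verify that the two expansions of $\delta((a_{ij}b_{ji})c_{ij})$ exactly line up after invoking the Moufang-type rearrangement), together with a careful ordering of the five sub-cases so that (V) draws only on the previously established (I), (II), (IV) and does not circularly require a (V)-type identity for the reversed index pair.
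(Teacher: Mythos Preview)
Your treatment of (I), (II), (III), and (IV) is essentially the paper's: the same commutator identifications, the same test-element argument for (III), and the same polarization $a_{ij}b_{ij}+b_{ij}a_{ij}=0$ for (IV).

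The gap is in (V). The two expansions of $\delta\big((a_{ij}b_{ji})c_{ij}\big)$ do \emph{not} line up to give $(\text{error})\cdot c_{ij}=0$. Carrying out your plan---applying (I) to the left-hand side, the identity $(a_{ij}b_{ji})c_{ij}=a_{ij}(b_{ji}c_{ij})+(c_{ij}a_{ij})b_{ji}$, then (II) and (IV)---and subtracting the target, every term cancels \emph{except} those involving $\delta(b_{ji}c_{ij})$. What remains is
\[
\bigl[\delta(a_{ij}b_{ji})-\delta(a_{ij})b_{ji}-a_{ij}\delta(b_{ji})\bigr]c_{ij}
= a_{ij}\bigl[\delta(b_{ji}c_{ij})-\delta(b_{ji})c_{ij}-b_{ji}\delta(c_{ij})\bigr],
\]
i.e.\ the $(i,j)$-error tested against $c_{ij}$ equals $a_{ij}$ times the $(j,i)$-error. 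This is precisely the circularity you warn about in your last sentence, and it does not disappear by reordering the cases: (II) applied to $a_{ij}(b_{ji}c_{ij})$ unavoidably produces $a_{ij}\delta(b_{ji}c_{ij})$, and nothing else in your expansion cancels it. Conditions (1)--(3) alone do not break this loop.

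The paper closes (V) differently. From $\tau([a_{ij},b_{ji}])\in\mathcal{Z}(\R)$ one gets that the $\R_{ii}$-error and the $\R_{jj}$-error sum to a central element $z$; if $z=0$ both Peirce components vanish and (V) holds. If $z\neq 0$, the paper multiplies by $a_{ij}$, uses (II) to introduce $\delta(a_{ij}b_{ji}a_{ij})$, and computes this quantity independently via the double commutator $[[a_{ij},b_{ji}],a_{ij}]=2\,a_{ij}b_{ji}a_{ij}$ (here the flexibility $a_{ij}(b_{ji}a_{ij})=(a_{ij}b_{ji})a_{ij}$ is what replaces your Moufang manipulation). This forces $a_{ij}z=0$, and then condition~(4) (nonzero central elements are invertible) gives $a_{ij}=0$, hence $z=0$ after all. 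Condition~(4) is thus the missing ingredient in your outline; without it, or without the double-commutator trick, the argument for (V) does not close.
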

\begin{proof}
Let us start with $(I)$
\begin{eqnarray*}
\delta(a_{ii}b_{ij}) &=& \D(a_{ii}b_{ij})= \D([a_{ii}, b_{ij}]) \\&=& [\D(a_{ii}), b_{ij}] + [a_{ii}, \D(b_{ij})] \\&=& [\delta(a_{ii}), b_{ij}] + [a_{ii}, \delta(b_{ij})] \\&=& \delta(a_{ii})b_{ij} + a_{ii}\delta(b_{ij}).
\end{eqnarray*} 
Next $(II)$
\begin{eqnarray*}
\delta(a_{ij}b_{jj}) &=& \D(a_{ij}b_{jj})= \D([a_{ij}, b_{jj}]) \\&=& [\D(a_{ij}), b_{jj}] + [a_{ij}, \D(b_{jj})] \\&=& [\delta(a_{ij}), b_{jj}] + [a_{ij}, \delta(b_{jj})] \\&=& \delta(a_{ij})b_{jj} + a_{ij}\delta(b_{jj}).
\end{eqnarray*} 
Now we show $(III)$. By Proposition \ref{Lflexivel} and $(I)$ we get
$$\delta((a_{ii}b_{ii})r_{ij}) = \delta(a_{ii}b_{ii})r_{ij} + (a_{ii}b_{ii})\delta(r_{ij}).$$ 
On the other hand,
\begin{eqnarray*}
\delta(a_{ii}(b_{ii}r_{ij})) &=& \delta(a_{ii})b_{ii}r_{ij} + a_{ii}\delta(b_{ii}r_{ij}) \\&=& \delta(a_{ii})b_{ii}r_{ij} + a_{ii}(\delta(b_{ii})r_{ij} + b_{ii}\delta(r_{ij})).
\end{eqnarray*}
As $(a_{ii}b_{ii})r_{ij} = a_{ii}(b_{ii}r_{ij})$ and $(a_{ii}b_{ii})\delta(r_{ij}) = a_{ii}(b_{ii}\delta(r_{ij}))$ we obtain
$$(\delta(a_{ii}b_{ii}) - \delta(a_{ii})b_{ii} - a_{ii}\delta(b_{ii}))r_{ij} = 0$$
for all $r_{ij} \in \R_{ij}$.
So $\delta(a_{ii}b_{ii}) = \delta(a_{ii})b_{ii} + a_{ii}\delta(b_{ii})$.

Next ($IV$). 
\begin{eqnarray*}
2\delta(a_{ij}b_{ij}) &=& \delta(2a_{ij}b_{ij}) = \D(2a_{ij}b_{ij}) \\&=& \D([a_{ij},b_{ij}]) = [\D(a_{ij}), b_{ij}] + [a_{ij}, \D(b_{ij})] \\&=& [\delta(a_{ij}), b_{ij}] + [a_{ij}, \delta(b_{ij})] \\&=& \delta(a_{ij})b_{ij} - b_{ij}\delta(a_{ij}) + a_{ij}\delta(b_{ij}) - \delta(b_{ij})a_{ij} \\&=& 2(\delta(a_{ij})b_{ij} + b_{ij}\delta(a_{ij})) 
\end{eqnarray*}
As $\R$ is $2$- torsion free it's follow that $\delta(a_{ij}b_{ij}) = \delta(a_{ij})b_{ij} + a_{ij}\delta(b_{ij})$. 
And finally we show $(V)$. We have
\begin{eqnarray*}
\tau([a_{ij}, b_{ji}]) &=& \D([a_{ij}, b_{ji}]) - \delta([a_{ij}, b_{ji}]) \\&=& [\D(a_{ij}), b_{ji}] + [a_{ij}, \D(b_{ji})] - \delta(a_{ij}b_{ji} - b_{ji}a_{ij}) \\&=& [\delta(a_{ij}), b_{ji}] + [a_{ij}, \delta(b_{ji})] - \delta(a_{ij} b_{ji}) + \delta(b_{ji}a_{ij}) \\&=& 
\delta(a_{ij})b_{ji} - b_{ji}\delta(a_{ij}) + a_{ij}\delta(b_{ji}) - \delta(b_{ji})a_{ij} - \delta(a_{ij}b_{ji}) \\&+& \delta(b_{ji}a_{ij}), 
\end{eqnarray*}
which implies 
$$[\delta(a_{ij})b_{ji} + a_{ij}\delta(b_{ji}) - \delta(a_{ij}b_{ji})] + [\delta(b_{ji}a_{ij}) - \delta(b_{ji})a_{ij} - b_{ji}\delta(a_{ij})] = z \in \mathcal{Z}(\R).$$
If $z= 0$ then $\delta(a_{ij}b_{ji}) = \delta(a_{ij})b_{ji} + a_{ij}\delta(b_{ji}).$
If $z \neq 0$ we multiply by $a_{ij}$ we get
$$a_{ij}\delta(b_{ji}a_{ij}) - a_{ij}\delta(b_{ji})a_{ij} - a_{ij}(b_{ji}\delta(a_{ij})) = a_{ij}z.$$
By $(II)$ we have
\begin{eqnarray}\label{dif}
\delta(a_{ij}b_{ji}a_{ij}) - \delta(a_{ij})(b_{ji}a_{ij})- a_{ij}\delta(b_{ji})a_{ij} - a_{ij}(b_{ji}\delta(a_{ij})) = a_{ij}z.
\end{eqnarray}
Now we observe that $\delta(a_{ij}b_{ji}a_{ij}) = \delta(a_{ij})(b_{ji}a_{ij})+ a_{ij}\delta(b_{ji})a_{ij} + a_{ij}(b_{ji}\delta(a_{ij}))$. In deed, observe that $[[a_{ij}, b_{ji}],a_{ij}] = 2a_{ij}b_{ji}a_{ij}$. Then
\begin{eqnarray*}
2\delta(a_{ij}b_{ji}a_{ij}) &=& \delta(2a_{ij}b_{ji}a_{ij}) \\&=& \D([[a_{ij}, b_{ji}],a_{ij}]) \\&=& [[\D(a_{ij}), b_{ji}],a_{ij}] + [[a_{ij}, \D(b_{ji})],a_{ij}] + [[a_{ij}, b_{ji}],\D(a_{ij})] \\&=& [[\delta(a_{ij}), b_{ji}],a_{ij}] + [[a_{ij}, \delta(b_{ji})],a_{ij}] + [[a_{ij}, b_{ji}],\delta(a_{ij})] \\&=& (\delta(a_{ij})b_{ji})a_{ij} + a_{ij}(b_{ji}\delta(a_{ij})) + 2a_{ij}\delta(b_{ji})a_{ij} \\&+& (a_{ij}b_{ji})\delta(a_{ij}) + \delta(a_{ij})(b_{ji}a_{ij}) \\&=& \delta(a_{ij})(b_{ji}a_{ij}) - (a_{ij}b_{ji})\delta(a_{ij}) + a_{ij}(b_{ji}\delta(a_{ij})) \\&+& a_{ij}(b_{ji}\delta(a_{ij})) + 2a_{ij}\delta(b_{ji})a_{ij} + (a_{ij}b_{ji})\delta(a_{ij}) + \delta(a_{ij})(b_{ji}a_{ij}) \\&=& 2(\delta(a_{ij})(b_{ji}a_{ij})+ a_{ij}\delta(b_{ji})a_{ij} + a_{ij}(b_{ji}\delta(a_{ij}))).  
\end{eqnarray*}
Since $\R$ is $2$-torsion free we get $\delta(a_{ij}b_{ji}a_{ij}) = \delta(a_{ij})(b_{ji}a_{ij})+ a_{ij}\delta(b_{ji})a_{ij} + a_{ij}(b_{ji}\delta(a_{ij}))$.
So $a_{ij}z = 0$ but by $(4)$ we have $zh = e_1 + e_2$ and $a_{ij} = 0$ which is a contradiction. Therefore $\delta(a_{ij}b_{ji}) = \delta(a_{ij})b_{ji} + a_{ij}\delta(b_{ji}).$
   
\end{proof}

\begin{lemma}\label{lema7}
$\delta$ is a derivation.
\end{lemma}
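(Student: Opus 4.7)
The plan is to reduce the derivation identity $\delta(ab) = \delta(a)b + a\delta(b)$ to the Leibniz-type identities of Lemma \ref{lema6} by expanding both sides along the Peirce decomposition. Writing $a = \sum_{p,q} a_{pq}$ and $b = \sum_{r,s} b_{rs}$ with $a_{pq}, b_{rs}$ in the respective Peirce components, distributivity gives $ab = \sum_{p,q,r,s} a_{pq}b_{rs}$; applying additivity of $\delta$ (Lemma \ref{lema5}) yields $\delta(ab) = \sum \delta(a_{pq}b_{rs})$, and similarly $\delta(a)b + a\delta(b) = \sum (\delta(a_{pq})b_{rs} + a_{pq}\delta(b_{rs}))$. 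Thus it suffices to establish the Leibniz identity on each of the sixteen Peirce products.

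First I would dispose of the ten products with nonzero image. Parts $(I)$--$(V)$ of Lemma \ref{lema6} are each stated symmetrically in the choice of $i,j$ with $i \neq j$, so by swapping the roles of $i$ and $j$ they cover respectively $\R_{ii}\R_{ij}$, $\R_{ij}\R_{jj}$, $\R_{ii}\R_{ii}$, $\R_{ij}\R_{ij}$ and $\R_{ij}\R_{ji}$ in both orderings, i.e.\ exactly the ten Peirce products permitted by relations $(i)$ and $(ii)$. For each of these cases Lemma \ref{lema6} gives the required identity directly.

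The remaining six products $\R_{11}\R_{22}$, $\R_{22}\R_{11}$, $\R_{11}\R_{21}$, $\R_{22}\R_{12}$, $\R_{12}\R_{11}$, $\R_{21}\R_{22}$ all vanish by Peirce multiplicative relation $(iii)$. Since by construction $\delta$ preserves Peirce positions (from $(A)$ we have $\delta(\R_{ij}) \subseteq \R_{ij}$ for $i \neq j$, and from $(B)$ we have $\delta(\R_{ii}) \subseteq \R_{ii}$), the terms $\delta(a_{pq})b_{rs}$ and $a_{pq}\delta(b_{rs})$ lie in the same Peirce component as $a_{pq}b_{rs}$, which is zero; so both sides of the Leibniz identity vanish trivially in these cases, and the full derivation property follows.

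There is no new algebraic obstacle to overcome here; the hard work has already been done in Lemma \ref{lema6}, and Lemma \ref{lema7} amounts to the bookkeeping step that assembles the partial identities into the global derivation property via additivity. The one point deserving care is to verify that the symmetric counterparts of $(I)$--$(V)$ (obtained by interchanging $i$ and $j$) are genuinely included in Lemma \ref{lema6}, which they are since the statement is already symmetric in the two indices.
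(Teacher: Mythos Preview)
Your approach is essentially the same as the paper's: expand $a$ and $b$ along the Peirce decomposition, use additivity of $\delta$ (Lemma~\ref{lema5}) to split $\delta(ab)$ into the Peirce summands, and then invoke the five cases of Lemma~\ref{lema6} on the ten surviving products. The paper simply lists those ten nonzero products and concludes; your treatment is slightly more careful in that you explicitly argue why the six vanishing Peirce products contribute nothing to \emph{either} side (using that $\delta$ preserves Peirce components), a point the paper leaves implicit.
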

\begin{proof}
Let be $a, b \in \R$. We have
\begin{eqnarray*}
\delta(ab) &=& \delta((a_{11} + a_{12} + a_{21} + a_{22})(b_{11} + b_{12} + b_{21} + b_{22})) \\&=& \delta(a_{11}b_{11}) + \delta(a_{11}b_{12}) + \delta(a_{12}b_{12}) + \delta(a_{12}b_{21}) + \delta(a_{12}b_{22}) \\&+& \delta(a_{21}b_{11}) + \delta(a_{21}b_{12}) + \delta(a_{21}b_{21}) + \delta(a_{22}b_{21}) + \delta(a_{22}b_{22}) \\&=& 
\delta(a)b + a \delta(b)
\end{eqnarray*}
by Lemmas \ref{lema5} and \ref{lema6}.
\end{proof}

\begin{lemma}\label{lema8}
$\tau$ sends the commutators into zero.
\end{lemma}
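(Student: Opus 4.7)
The plan is to exploit the decomposition $\tau = \D - \delta$ directly, together with the two facts already established: $\D$ is a Lie multiplicative derivation by hypothesis, and $\delta$ is an additive (associative) derivation by Lemma \ref{lema7}. The key observation is that any associative derivation automatically satisfies the Leibniz rule on commutators, so $\delta$ is itself a Lie derivation. Thus the ``Lie defect'' of $\D$ and $\delta$ cancel perfectly when we subtract them.

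Concretely, I would first compute $\delta([a,b]) = \delta(ab) - \delta(ba)$ using Lemma \ref{lema7} to obtain
\begin{eqnarray*}
\delta([a,b]) = \delta(a)b + a\delta(b) - \delta(b)a - b\delta(a) = [\delta(a), b] + [a, \delta(b)].
\end{eqnarray*}
Next, I would use the defining property of a Lie multiplicative derivation to write $\D([a,b]) = [\D(a), b] + [a, \D(b)]$. Subtracting these two identities and using bilinearity of the commutator gives
\begin{eqnarray*}
\tau([a,b]) = \D([a,b]) - \delta([a,b]) = [\tau(a), b] + [a, \tau(b)].
\end{eqnarray*}

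Finally, by construction $\tau$ takes values in $\mathcal{Z}(\R)$, so $[\tau(a), b] = 0$ and $[a, \tau(b)] = 0$ for every $a, b \in \R$, and the desired conclusion $\tau([a,b]) = 0$ follows. There is no real obstacle here: the statement is a one-line consequence of the derivation property of $\delta$ (Lemma \ref{lema7}) and the centrality of the image of $\tau$, both of which have been secured in the preceding lemmas. The only care needed is to make sure $\delta$ is used as a two-sided (associative) derivation, which is exactly what Lemma \ref{lema7} provides.
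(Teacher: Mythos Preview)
Your proof is correct and is essentially the same as the paper's. The paper writes the chain $\tau([a,b]) = \D([a,b]) - \delta([a,b]) = [\D(a),b] + [a,\D(b)] - \delta([a,b]) = [\delta(a),b] + [a,\delta(b)] - \delta([a,b]) = 0$, passing from $\D$ to $\delta$ inside the commutators by centrality of $\tau$ and then cancelling via Lemma~\ref{lema7}; you instead first expand $\delta([a,b])$ and then subtract to obtain $\tau([a,b]) = [\tau(a),b] + [a,\tau(b)] = 0$, which is the same argument reorganized.
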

\begin{proof}
\begin{eqnarray*}
\tau([a, b]) &=& \D([a, b]) - \delta([a, b])
\\&=& [\D(a) , b] + [a,\D(b)] - \delta([a, b])
\\&=& [\delta(a) , b] + [a, \delta(b)] - \delta([a, b])
\\&=& 0.
\end{eqnarray*}
\end{proof}

Let us assume that $\D\colon  \R \rightarrow \R$ is a Lie multiplicative derivation of the form $\D = \delta + \tau$ where $\delta$ is an additive derivation of $\R$ and $\tau$ is a map from $\R$ into its center $\mathcal{Z}(\R)$, which maps commutators into the zero.
So
\begin{eqnarray*}
e_{2}\D(a_{11})e_2 &=& e_2\delta(a_{11})e_2 + e_2\tau(a_{11})e_2 \\&=& e_2\delta(e_1 a_{11})e_2 + e_2\tau(a_{11})e_2\\&=& e_2(\delta(e_1)a_{11} +e_1 \delta(a_{11}))e_2 + e_2\tau(a_{11})e_2 \\&=& e_2(\delta(e_1)a_{11})e_2 + e_2(e_1 \delta(a_{11}))e_2 + e_2\tau(a_{11})e_2 \\&=&(e_2\delta(e_1))(a_{11}e_2) + (e_2e_1) (\delta(a_{11})e_2) + e_2\tau(a_{11})e_2 \\&=& e_2\tau(a_{11})e_2 \in \mathcal{Z}(\R)e_2.
\end{eqnarray*}

and

\begin{eqnarray*}
e_{1}\D(a_{22})e_1 &=& e_1\delta(a_{22})e_1 + e_1\tau(a_{22})e_1 \\&=& e_1\delta(e_2 a_{22})e_1 + e_1\tau(a_{22})e_1\\&=& e_1(\delta(e_2)a_{22} +e_2 \delta(a_{22}))e_1 + e_1\tau(a_{22})e_1 \\&=& e_1(\delta(e_2)a_{22})e_1 + e_1(e_2 \delta(a_{22}))e_1 + e_1\tau(a_{22})e_1 \\&=&(e_1\delta(e_2))(a_{22}e_1) + (e_1e_2) (\delta(a_{22})e_1) + e_1\tau(a_{22})e_1 \\&=& e_1\tau(a_{22})e_1 \in \mathcal{Z}(\R)e_1,
\end{eqnarray*}
for every $a_{11} \in \R_{11}$ and $a_{22} \in \R_{22}$. This demonstrates the letters $a)$ and $b)$ and the proof of the Theorem \ref{mainthm} is complete.

\vspace{0,5cm}

\section{Applications}

\begin{corollary}
Let $\R$ be an unital prime alternative ring with nontrivial idempotent satisfying $(4)$ and $\D \colon  \R \rightarrow \R$ is a Lie multiplicative derivation. Then
$\D$ is the form $\delta + \tau$, where $\delta$ is an additive derivation of $\R$ and $\tau$ is a map from $\R$ into its center $\mathcal{Z}(\R)$, which maps
commutators into the zero if and only if
\begin{enumerate}
\item[\it a)] $e_2\D(\R_{11})e_2 \subseteq \mathcal{Z}(\R) e_2,$
\item[\it b)] $e_1\D(\R_{22})e_1 \subseteq \mathcal{Z}(\R) e_1.$
\end{enumerate}
\end{corollary}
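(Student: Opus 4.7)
The plan is to observe that this corollary is essentially an immediate consequence of the Main Theorem (Theorem \ref{mainthm}), once one verifies that a prime alternative ring satisfying $(4)$ falls inside the class of rings the Main Theorem is stated for. So the proof amounts to a hypothesis check rather than a new argument.

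First I would recall the blanket assumptions imposed in the Main Theorem: the ring is $2,3$-torsion free alternative with a nontrivial idempotent and satisfies the four conditions $(1)$--$(4)$ listed in the paper. Under the corollary's hypotheses the ring is already unital, alternative, and has a nontrivial idempotent, and condition $(4)$ is explicitly assumed. The missing ingredients are $2,3$-torsion freeness and conditions $(1)$--$(3)$. The torsion freeness follows from primeness in the usual way (a prime alternative ring has no nontrivial central torsion coming from an integer, which can be extracted from Theorem \ref{meu}); the three Peirce annihilator conditions $(1)$, $(2)$, $(3)$ are exactly what the Remark immediately following display (\ref{identi}) establishes for prime alternative rings via Theorem \ref{meu}.

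Next I would apply Theorem \ref{mainthm} directly. For the forward direction, assuming $\D=\delta+\tau$ with $\delta$ an additive derivation and $\tau$ a central-valued commutator-vanishing map, the exact computation carried out at the end of Section 2 (using $e_1a_{11}=a_{11}$, $e_2a_{22}=a_{22}$, the derivation property of $\delta$, the orthogonality $e_1e_2=e_2e_1=0$, and centrality of $\tau$) yields $e_2\D(\R_{11})e_2\subseteq\mathcal{Z}(\R)e_2$ and $e_1\D(\R_{22})e_1\subseteq\mathcal{Z}(\R)e_1$. For the converse direction, assuming $(a)$ and $(b)$, the construction of $\delta$ and $\tau$ given in Lemmas \ref{lema1}--\ref{lema8} applies verbatim, because each of those lemmas uses only the properties now verified for $\R$: $2,3$-torsion freeness, conditions $(1)$--$(4)$, and the Peirce relations.

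I do not expect any real obstacle here — the only thing to be careful about is making sure every lemma of Section 2 that was invoked for a generic alternative ring satisfying $(1)$--$(4)$ is still applicable; since primeness is strictly stronger than $(1)$, $(2)$, $(3)$ (by the Remark) and $(4)$ is carried along as an explicit hypothesis, nothing new needs to be proved and the corollary follows by a one-line appeal to Theorem \ref{mainthm}.
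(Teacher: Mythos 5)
Your proposal takes the same route as the paper, which states this corollary without a separate proof precisely because it is an immediate consequence of Theorem \ref{mainthm} once the Remark following the list of conditions $(1)$--$(4)$ is invoked: primeness (via Theorem \ref{meu}) yields $(1)$, $(2)$, $(3)$, condition $(4)$ is carried as an explicit hypothesis, and both directions of the equivalence are then read off from the main theorem. The one flaw is your parenthetical claim that $2,3$-torsion freeness ``follows from primeness in the usual way'' and ``can be extracted from Theorem \ref{meu}'': this is false --- prime rings of characteristic $2$ or $3$ exist, and Theorem \ref{meu} \emph{assumes} $3$-torsion freeness as a hypothesis rather than deriving it. That hypothesis is not derivable from primeness; it must simply be inherited from the paper's standing assumption that $\R$ is $2,3$-torsion free (the paper is admittedly silent about this in the corollary's statement, but your proof should carry it as an assumption rather than pretend to prove it).
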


And we finished the article with an application on simple alternative rings.

\begin{corollary}
Let $\R$ be an unital simple alternative ring with nontrivial idempotent and $\D \colon  \R \rightarrow \R$ is a Lie multiplicative derivation. Then
$\D$ is the form $\delta + \tau$, where $\delta$ is an additive derivation of $\R$ and $\tau$ is a map from $\R$ into its center $\mathcal{Z}(\R)$, which maps
commutators into the zero if and only if
\begin{enumerate}
\item[\it a)] $e_2\D(\R_{11})e_2 \subseteq \mathcal{Z}(\R) e_2,$
\item[\it b)] $e_1\D(\R_{22})e_1 \subseteq \mathcal{Z}(\R) e_1.$
\end{enumerate}
\end{corollary}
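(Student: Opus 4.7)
The plan is to reduce to Theorem \ref{mainthm} (equivalently, to the previous corollary) by showing that any unital simple alternative ring $\R$ is prime and satisfies condition (4); the remaining hypotheses (1)--(3) then follow from the Remark after Proposition \ref{prop2}, exactly as in the prime case.

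First I would check primality. Because $\R$ is unital, $\R\cdot\R\ni 1\neq 0$; and if $\mathfrak{A},\mathfrak{B}$ are any two nonzero ideals, simplicity forces $\mathfrak{A}=\mathfrak{B}=\R$, so $\mathfrak{A}\mathfrak{B}=\R\cdot\R\neq 0$. This is immediate, and together with the $3$-torsion-free hypothesis (which is inherited in the same way it is for the prime corollary) it activates the Remark giving (1), (2), (3).

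The only step with genuine content is condition (4). Given a nonzero $z\in\mathcal{Z}(\R)$, I would argue that $z\R$ is a two-sided ideal. Since $\mathcal{Z}(\R)\subseteq\mathcal{N}(\R)$, the associator $(z,r,s)$ vanishes, so $(zr)s=z(rs)\in z\R$, making $z\R$ a right ideal; on the other side, $s(zr)=(sz)r=(zs)r=z(sr)\in z\R$, using the nucleus property of $z$ twice together with $sz=zs$. Because $z\R$ contains $z\cdot 1=z\neq 0$, simplicity forces $z\R=\R$.

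With (1)--(4) now in hand, the conclusion is an immediate application of Theorem \ref{mainthm}. There is no real obstacle: the only verification that is not purely formal is that $z\R$ is a left ideal, which is precisely where the containment $\mathcal{Z}(\R)\subseteq\mathcal{N}(\R)$ is essential; every other requirement follows from unitality plus the fact that in a simple ring every nonzero ideal is the whole ring.
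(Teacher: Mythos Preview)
Your proposal is correct and follows essentially the same route as the paper: reduce to the prime case by noting that a unital simple ring is prime and that condition (4) holds. The paper compresses the verification of (4) into the remark that $\mathcal{Z}(\R)$ is a field, whereas you unpack this by showing directly that $z\R$ is a nonzero two-sided ideal; these are the same argument at different levels of detail.
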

\begin{proof}
It is enough to observe that every single ring is prime and $\mathcal{Z}(\R)$ is a field.
\end{proof}

\end{document}